\documentclass[letterpaper,10pt,conference]{style/ieeeconf}

\IEEEoverridecommandlockouts

\usepackage[hidelinks,bookmarksopen,bookmarksnumbered]{hyperref}

\usepackage[utf8]{inputenc}
\usepackage{xcolor}
\usepackage{booktabs}
\usepackage{graphicx}
\usepackage{times}
\usepackage{amsmath, amsthm}
\usepackage{amssymb}
\usepackage{bm}
\usepackage{microtype}
\usepackage{mathtools}
\usepackage[ruled,algo2e]{algorithm2e}
\usepackage{cite}
\usepackage{adjustbox}
\usepackage{enumitem}
\usepackage{xparse}

\newcommand{\mcal}[1]{\mathcal{#1}}
\newcommand{\T}{\mathsf{T}}

\newcommand{\norm}[1]{\left\Vert #1 \right\Vert}
\newcommand{\mbb}[1]{\mathbb{#1}}

\newcommand{\diag}{\mathrm{diag}}

\newcommand{\minimize}[1]{ \ensuremath{\underset{#1}{\text{minimize}}\ }}
\newcommand{\algsep}{\\[0.4em]}
\newcommand*{\mean}[1]{\ensuremath{\overline{#1}}}
\newcommand*{\trace}[1]{\ensuremath{\mathrm{trace}\left({#1}\right)}}
\newcommand*{\dom}[1]{\ensuremath{\mathrm{dom}\left({#1}\right)}}

\newtheorem{assumption}{Assumption}

\newtheorem{theorem}{Theorem}
\newtheorem{definition}{Definition}
\newtheorem{lemma}{Lemma}
\newtheorem{remark}{Remark}

\NewDocumentCommand{\card}{sO{}m}{%
  {\IfBooleanTF{#1}
    {\oldnormaux{\left|\right.}{\left.\right|}{#3}}
    {\oldnormaux{#2|}{#2|}{#3}}}
}
\makeatletter
\newcommand{\oldnormaux}[3]{\mathpalette\oldnormaux@i{{#1}{#2}{#3}}}
\newcommand{\oldnormaux@i}[2]{\oldnormaux@ii#1#2}
\newcommand{\oldnormaux@ii}[4]{%
  \sbox\z@{$\m@th#1#2#4#3$}%
  \sbox\tw@{$\m@th\|$}%
  \mathopen{\hbox to\wd\tw@{\hss\vrule height \ht\z@ depth \dp\z@ width .2\wd\tw@\hss}}%
  #4
  \mathclose{\hbox to\wd\tw@{\hss\vrule height \ht\z@ depth \dp\z@ width .2\wd\tw@\hss}}%
}
\makeatother

\makeatletter
\newcommand{\longdash}[1][2em]{%
  \makebox[#1]{$\m@th\smash-\mkern-7mu\cleaders\hbox{$\mkern-2mu\smash-\mkern-2mu$}\hfill\mkern-7mu\smash-$}}
\makeatother
\newcommand{\omitskip}{\kern-\arraycolsep}
\newcommand{\llongdash}[1][2em]{\longdash[#1]\omitskip}
\newcommand{\rlongdash}[1][2em]{\omitskip\longdash[#1]}

\title{\LARGE \bf Distributed Conjugate Gradient Method via Conjugate Direction Tracking}

\author{Ola Shorinwa$^{1}$ and Mac Schwager$^{2}$%
	\thanks{*This work was supported  in part by NSF NRI awards 1830402 and 1925030 and ONR grant N00014-18-1-2830.}%
	\thanks{$^{1}$Ola Shorinwa is with the Department of Mechanical Engineering, Stanford University, CA, USA {\tt\small shorinwa@stanford.edu}.}%
	\thanks{$^{2}$Mac Schwager is with the Department of Aeronautics and Astronautics Engineering, Stanford University, CA, USA {\tt\small schwager@stanford.edu}.}%
}

\begin{document}
	\maketitle
	\thispagestyle{empty}
	\pagestyle{empty}

	\begin{abstract}
		We present a distributed conjugate gradient method for distributed optimization problems, where each agent computes an optimal solution of the problem locally \emph{without} any central computation or coordination, while communicating with its immediate, one-hop neighbors over a communication network. Each agent updates its local problem variable using an estimate of the \emph{average conjugate direction} across the network, computed via a dynamic consensus approach. Our algorithm enables the agents to use \emph{uncoordinated} step-sizes. We prove convergence of the local variable of each agent to the optimal solution of the aggregate optimization problem, without requiring decreasing step-sizes. In addition, we demonstrate the efficacy of our algorithm in distributed state estimation problems, and its robust counterparts, where we show its performance compared to existing distributed first-order optimization methods.

	\end{abstract}

	\section{Introduction}
\label{sec:introduction}
A variety of problems in many disciplines can be formulated as distributed optimization problems, where a group of agents seek to compute the optimal estimate, action, or control that minimizes (or maximizes) a specified objective function. Examples of such problems include distributed target tracking \cite{zhang2018adaptive, zhu2013distributed, shorinwa2020distributed, park2019distributed}, pose/state/signal estimation in sensor/robotic networks \cite{rabbat2004distributed, necoara2011parallel, mateos2010distributed}; machine learning and statistical modeling \cite{konevcny2016federated, li2020federated, zhou2021communication}; process control \cite{nedic2018distributed, wang2017distributed, erseghe2014distributed};  and multi-agent planning and control \cite{rostami2017admm, tang2019distributed, shorinwa2023distributed}. In these problems, the data is collected and stored locally by each agent, with the additional constraint that no individual agent has access to all the problem data across the network. In many situations, the limited availability of communication and data storage resources, in addition to privacy regulations, preclude the aggregation of the problem data at a central location or node, effectively rendering \emph{centralized optimization} methods infeasible.

\emph{Distributed optimization} enables each agent to compute an optimal solution via local computation procedures while communicating with its neighbors over a communication network. In essence, via distributed optimization, each agent \emph{collaborates} with its neighbors to compute an optimal solution without access to the aggregate problem data.
Some distributed optimization methods require a central coordinator for execution or coordination of some of the update procedures. These methods are often used in machine learning for parallel processing on  a cluster of computing nodes, especially in problems involving large datasets. In contrast, in this work, we focus on \emph{fully-distributed} algorithms that do not require a central node for coordination or computation.

We derive a distributed conjugate gradient algorithm, termed DC-Grad, for distributed optimization problems. In our algorithm, each agent utilizes \emph{first-order} information (i.e., gradients) of its local objective function to compute its local conjugate directions for updating its local estimate of the solution of the optimization problem at each iteration and communicates with its one-hop neighbor over a point-to-point communication network. Each agent does not share its local problem data, including its objective function and gradients, with other agents, preserving the \emph{privacy} of the agents. For simplicity of exposition, we limit our analysis to distributed optimization problems with smooth, convex objective functions. We prove convergence of the local problem variables of all agents to the optimal solution of the aggregate optimization problem. 

We examine the performance of our distributed algorithm in comparison to notable existing distributed optimization methods in distributed state estimation and robust-state-estimation problems. In both problems, we show that our algorithm converges with the least communication overhead in densely-connected communication networks, with some additional computation overhead in comparison to the best-competing distributed algorithm DIGing-ATC. On sparsely-connected graphs, our algorithm performs similarly to other first-order distributed optimization methods.

	 \section{Related Work}
 \label{sec:related_work}
Distributed optimization methods have received significant attention, with many such methods developed from their centralized counterparts. Distributed first-order methods leverage the local gradients (i.e., first-order information) of each agent to iteratively improve the each agent's local estimate of the optimal solution of the optimization problem, bearing similarities with other centralized first-order methods such as the centralized gradient descent. Distributed incremental (sub)gradient methods require a central node that receives the local gradient information from each agent and performs the associated update step \cite{nedic2001distributed}. As such, these methods require a hub-spoke communication model --- where all the agents are connected to the central node (hub) --- or a ring communication model (a cyclic network), which is quite restrictive.

Distributed (sub)gradient methods circumvent this limitation, enabling distributed optimization over arbitrary network topologies. At each iteration, each agent exchanges its local iterates and other auxiliary variables (such as estimates of the average gradient of the joint (global) objective function) with other neighboring  agents. In distributed (sub)gradient methods, each agent recursively updates its local estimate using its local (sub)gradient and \emph{mixes} its estimates with the estimates of its neighbors via a convex combination, where the \emph{mixing} step is achieved via average consensus \cite{nedic2009, matei2011performance} or the push-sum technique \cite{olshevsky2009convergence, benezit2010weighted}. Generally, distributed (sub)gradient methods require a diminishing step-size for convergence to the optimal solution in convex problems \cite{lobel2010distributed, yuan2016convergence}, which typically slows down convergence. With a constant step-size, these methods converge to a neighborhood of the optimal solution.

Distributed gradient-tracking methods were developed to eliminate the need for diminishing step-sizes \cite{shi2015extra, qu2017harnessing, nedic2017achieving}. In these methods, in addition to the local estimate of the optimal solution, each agent maintains an estimate of the \emph{average gradient} of the objective function and updates its local estimate of the optimal solution by taking a descent step in the direction of the estimated average gradient. Distributed gradient-tracking methods provide faster convergence guarantees with constant step-sizes. Further, diffusion-based distributed algorithms \cite{chen2012diffusion, yuan2018exact, yuan2018exact2} converge to the optimal solution with constant step-sizes. Distributed first-order methods have been derived for undirected \cite{shi2015extra, xu2015augmented} and directed \cite{saadatniaki2018optimization, zeng2017extrapush} networks, as well as static \cite{xi2017add, xin2018linear} and time-varying \cite{nedic2017achieving} networks. In addition, acceleration schemes such as momentum and Nesterov acceleration have been applied to distributed first-order methods \cite{xin2019distributedNesterov, qu2019accelerated, lu2020nesterov}.

Distributed methods that leverage higher-order information have been developed for distributed optimization, including distributed quasi-newton methods that approximate the inverse Hessian of the objective function \cite{Mokhtari2015, Eisen2019, mansoori2019fast}. Further, the alternating direction method of multipliers (ADMM) is amenable to consensus optimization problem. In ADMM, each agent maintains an estimate of the optimal solution in addition to dual variables associated with the consensus constraints between agents. However, ADMM, in its original form, requires a central node for computation of the dual update procedure. Fully-distributed variants of ADMM have been developed, addressing this limitation \cite{mateos2010distributed, Ling2015, chang2014multi, farina2019distributed}. In general, ADMM methods are amenable to static, undirected communication networks.

The conjugate gradient (CG) method was originally developed for computing the solution of a linear system of equations (i.e., $Ax = b$), where the matrix ${A \in \mathbb{R}^{n \times n}}$ is square, symmetric, and positive-definite \cite{hestenes1952methods, hager2006survey}. More generally, the method applies to strongly-convex quadratic programming problems, where the conjugate gradient method is guaranteed to compute the optimal solution in at most $n$ iterations, in the absence of roundoff errors. The conjugate gradient method has been extended to nonlinear optimization problems (which includes non-quadratic problems) \cite{dai1999nonlinear, yuan2020conjugate}. In general, the conjugate gradient method provides faster convergence compared to gradient descent methods \cite{gilbert1992global, yuan2019global, shewchuk1994introduction}. Variants of the conjugate gradient method for parallel execution on multiple computing nodes (processors) have been developed \cite{ismail2013implementation, chen2004implementing, lanucara1999conjugate, helfenstein2012parallel, engelmann2021essentially}. These methods decompose the data matrix associated with the linear system of equations into individual components assigned to each processor, enabling parallelization of the matrix-vector operations arising in the conjugate gradient method, which constitute the major computational bottleneck in the CG method. However, these methods are only amenable to problems with hub-spoke communication models or all-to-all communication models. Some other distributed CG methods eliminate the need for a hub-spoke communication model \cite{xu2016distributed}, but, however, require a ring communication model, which does not support parallel execution of the update procedures, ultimately degrading the computational speed of the algorithm. The distributed variant \cite{ping2021dcg} allows for more general communication networks. Nonetheless, these  CG methods are limited to solving a linear system of equations and do not consider a more general optimization problem. Few distributed CG methods for nonlinear optimization problems exist. The work in \cite{xu2020distributed} derives a distributed CG method for online optimization problems where mixing of information is achieved using the average consensus scheme. Like distributed (sub)gradient methods, this algorithm requires a diminishing step-size for convergence to the optimal solution, converging to a neighborhood of the optimal solution if a constant step-size is used.

In this paper, we derive a distributed conjugate gradient method for a more general class of optimization problems, including problems with nonlinear objective functions, and prove convergence of the algorithm to the optimal solution in convex problems with Lipschitz-continuous gradients. Moreover, we note that, in our algorithm, each agent can use uncoordinated constant step-sizes.

		\section{Notation and Preliminaries}
\label{sec:preliminaries}
In this paper, we denote the gradient of a function $f$ by $\nabla f$ and $g$, interchangeably. We denote the all-ones vector as ${\bm{1}_{n} \in \mbb{R}^{n}}$. We represent the inner-product of two matrices ${A \in \mbb{R}^{m \times n}}$ and ${B \in \mbb{}R^{m \times n}}$ as ${\langle A, B \rangle = \trace{A^{\T}B}}$. We denote the standard scalar-vector product, matrix-vector product, and matrix-matrix product (composition) as ${A \cdot B}$, depending on the mathematical context. 
For a given matrix ${A \in \mbb{R}^{m \times n}}$, we denote its spectral norm as ${\rho(A) = \norm{A}_{2}}$. Further, we denote its Frobenius norm by $\norm{A}_{F}$. Likewise, we define the mean of a matrix ${B \in \mbb{R}^{N \times n}}$, computed across its rows, as ${\mean{B} = \frac{1}{N}\bm{1}_{N}\bm{1}_{N}^{\T}B \in \mbb{R}^{N \times n}}$, where each row of $\mean{B}$ is the same. In addition, we define the consensus violation between the matrix ${B \in \mbb{R}^{N \times n}}$ and its mean ${\mean{B} \in \mbb{R}^{N \times n}}$ as ${\tilde{B} = B - \mean{B}}$. We denote the domain of a function $f$ as ${\mathrm{dom}(f)}$, the non-negative orthant as $\mbb{R}_{+}$, and the strictly-positive orthant as $\mbb{R}_{++}$.

We introduce the following definitions that will be relevant to our discussion.

\begin{definition}[Conjugacy]
	Two vectors ${a, b \in \mbb{R}^{n}}$ are conjugate with respect to a symmetric positive-definite matrix ${C \in \mbb{R}^{n \times n}}$ if:
	\begin{equation}
		a^{\T}Cb = \langle a, Cb \rangle = \langle Ca, b \rangle = 0.
	\end{equation}
\end{definition}

\begin{definition}[Convex Function]
	A function ${f: \mathbb{R}^{n} \rightarrow \mathbb{R}}$ is convex if for all ${x,y \in \mathrm{dom}(f)}$ and all ${\zeta \in [0,1]}$:
	\begin{equation}
		f(\zeta x + (1 - \zeta)y) \leq \zeta f(x) + (1 - \zeta) f(y),
	\end{equation}
	and the domain of $f$, ${\dom{f} \subseteq \mathbb{R}^{n}}$, is convex.
\end{definition}

\begin{definition}[Smoothness]
	A function ${f: \mathbb{R}^{n} \rightarrow \mathbb{R}}$ is \mbox{$L$-smooth} if it is continuously differentiable over its domain and its gradients are $L$-Lipschitz continuous, i.e.:
	\begin{equation}
		\norm{\nabla{f}(x) - \nabla{f}(y)}_{2} \leq L \norm{x - y}_{2},\ \forall x, y \in \dom{f},
	\end{equation}
	where ${L \in \mbb{R}_{++}}$ is the Lipschitz constant.
\end{definition}

\begin{definition}[Coercive Function]
	A function ${f: \mathbb{R}^{n} \rightarrow \mathbb{R}^{m}}$ is coercive if ${f(x) \rightarrow \infty}$ as ${x \rightarrow \infty}$, for all ${x \in \dom{f}}$.
\end{definition}

We represent the agents as nodes in an undirected, connected communication graph ${\mcal{G} = (\mcal{V}, \mcal{E})}$, where ${\mcal{V} = \{1, \ldots, N\}}$ denotes the set of vertices, representing the agents, and ${\mcal{E} \subset \mcal{V} \times \mcal{V}}$ represents the set of edges. An edge $(i, j)$ exists in $\mcal{E}$ if agents $i$ and $j$ share a communication link. Moreover, we denote the set of neighbors of agent $i$ as $\mcal{N}_{i}$. We associate a \emph{mixing matrix} ${W \in \mbb{R}^{N \times N}}$ with the underlying communication graph. A mixing matrix $W$ is compatible with $\mcal{G}$ if ${w_{ij} = 0}$,~${\forall j \notin \mcal{N}_{i} \cup \{i\}}$,~${\forall i \in \mcal{V}}$. We denote the \emph{degree} of agent $i$ as ${\deg(i) = |\mcal{N}_{i}|}$, representing the number of neighbors of agent $i$, and the \emph{adjacency matrix} associated with $\mcal{G}$ as ${\mcal{A} \in \mbb{R}^{N \times N}}$, where ${\mcal{A}_{ij} = 1}$ if and only if ${j \in \mcal{N}_{i}}$,~${\forall i \in \mcal{V}}$. In addition, we denote the \emph{graph Laplacian} of $\mcal{G}$ as ${L = \diag(\deg(1),\ldots,\deg(N)) - \mcal{A}}$.
In this work, we make the following assumption on the mixing matrix.

\begin{assumption}
	\label{assm:mixing_matrix}
	The mixing matrix $W$ associated with the communication graph $G$ satisfies:
	\begin{enumerate}
		\item \emph{(Double-Stochasticity)} $W \bm{1} = \bm{1}$ and $\bm{1}^{\T} W = \bm{1},$
		\item \emph{(Spectral Property)} $\lambda = \rho(W - \frac{\bm{1}_{N}\bm{1}_{N}^{\T}}{N}) < 1$. \label{assm:mixing_matrix_spectral}
	\end{enumerate}
\end{assumption}

Part \ref{assm:mixing_matrix_spectral} of Assumption \ref{assm:mixing_matrix} specifies that the matrix ${M = W - \frac{\bm{1}_{N}\bm{1}_{N}^{\T}}{N}}$ has a spectral norm less than one. This assumption is necessary and sufficient for consensus, i.e.,
\begin{equation}
	\lim_{k \rightarrow \infty} W^{k} \rightarrow \frac{\bm{1}_{N}\bm{1}_{N}^{\T}}{N}.
\end{equation}

We note that Assumption \ref{assm:mixing_matrix} is not restrictive, in undirected communication networks. We provide common choices for the mixing matrix $W$:
\begin{enumerate}
	\item \emph{Metropolis-Hastings Weights}:
	\begin{equation*}
		w_{ij} = \begin{cases*}
				\frac{1}{\max\{\deg(i), \deg(j)\} + \epsilon}, & if $(i,j) \in \mcal{E},$ \\
				0 & if $(i,j) \notin \mcal{E}$ and $i \neq j,$ \\
				1 - \sum_{r \in \mcal{V}} w_{ir} & if $i = j,$
			\end{cases*}
	\end{equation*}
	where ${\epsilon \in \mbb{R}_{++}}$ denotes a small positive constant, e.g., ${\epsilon = 1}$ \cite{xiao2007distributed}.

	\item \emph{Laplacian-based Weights}:
	\begin{equation*}
		W = I - \frac{L}{\tau},
	\end{equation*}
	where $L$ denotes the Laplacian matrix of $\mcal{G}$, and ${\tau \in \mbb{R}}$ denotes a scaling parameter with ${\tau > \frac{1}{2} \lambda_{\max}(L)}$. One can choose ${\tau = \max_{i \in \mcal{V}} \{\deg(i)\} + \epsilon}$, if computing $\lambda_{\max}(L)$ is infeasible, where ${\epsilon \in \mbb{R}_{++}}$ represents a small positive constant \cite{sayed2014diffusion}.
\end{enumerate}
	\section{Problem Formulation And The Centralized Conjugate Gradient Method}
We consider the distributed optimization problem:
\begin{equation}
	\label{eq:global_prob}
	\minimize{x \in \mathbb{R}^{n}} \frac{1}{N} \sum_{i = 1}^{N} f_{i}(x),
\end{equation}
over $N$ agents, where ${f_{i}: \mbb{R}^{n} \rightarrow \mathbb{R}}$ denotes the local objective function of agent $i$ and ${x \in \mbb{R}}$ denotes the optimization variable. The objective function of the optimization problem \eqref{eq:global_prob} consists of a sum of $N$ local components, making it \emph{separable}, with each component associated with an agent. We assume that agent $i$ only knows its local objective function $f_{i}$ and has no knowledge of the objective function of other agents. 

We begin with a description of the centralized nonlinear conjugate gradient method, before deriving our method in Section ~\ref{sec:distirbuted_alg}. The nonlinear conjugate gradient method (a generalization of the conjugate gradient method to optimization problems beyond quadratic programs) represents an iterative first-order optimization algorithm that utilizes the gradient of the objective function to generate iterates from the recurrence:
\begin{equation}
	\label{eq:cen_conjugate_method}
	x^{(k+1)} = x^{(k)} + \alpha^{(k)} \cdot s^{(k)},
\end{equation}
where ${x^{(k)} \in \mbb{R}^{n}}$ denotes the estimate at iteration $k$, ${\alpha^{(k)} \in \mbb{R}_{+}}$ denotes the step-size at iteration $k$, and ${s^{(k)} \in \mbb{R}^{n}}$ denotes the conjugate direction at iteration $k$. In the nonlinear conjugate direction method, the conjugate direction is initialized as the negative gradient of the objective function at the initial estimate, with ${s^{(0)} = -g^{(0)}}$. Further, the conjugate directions are generated from the recurrence:
\begin{equation}
	s^{(k + 1)} = -g^{(k+1)} + \beta^{(k)} \cdot s^{(k)},
\end{equation}
at iteration $k$, where ${\beta^{(k)} \in \mbb{R}}$ denotes the conjugate gradient update parameter. Different schemes have been developed for updating the conjugate update parameter. Here, we provide a few of the possible schemes:
\begin{itemize}
	\item \emph{Hestenes-Stiefel Scheme} \cite{hestenes1952methods}:
	\begin{equation}
		\label{eq:hestenes_stiefel_CG_parameter}
		\beta_{HS}^{(k)} = \frac{\left(g^{(k+1)}  - g^{(k)}\right)^\T g^{(k+1)}}{\left(g^{(k+1)}  - g^{(k)}\right)^ \T s^{(k)}}
	\end{equation}

	\item \emph{Fletcher-Reeves Scheme} \cite{fletcher1964function}:
	\begin{equation}
		\label{eq:flecther_reeves_CG_parameter}
		\beta_{FR}^{(k)} = \frac{\norm{g^{(k+1)}}_{2}^{2}}{\norm{g^{(k)}}_{2}^{2}}
	\end{equation}

	\item \emph{Polak-Ribi\`{e}re Scheme} \cite{polak1969note, polyak1969conjugate}:
	\begin{equation}
		\label{eq:polak_ribiere_CG_parameter}
		\beta_{PR}^{(k)} = \frac{\left(g^{(k+1)}  - g^{(k)}\right)^\T g^{(k+1)}}{\norm{g^{(k)}}_{2}^{2}}
	\end{equation}
\end{itemize}

We note that the update schemes are equivalent when $f$ is a strongly-convex quadratic function.
Moreover, when $f$ is strongly-convex and quadratic, the search directions $\{s^{(k)}\}_{\forall k}$ are conjugate. As a result, the iterate $x^{(k)}$ converges to the optimal solution in at most $n$ iterations. For non-quadratic problems, the search directions lose conjugacy, and convergence may occur after more than $n$ iterations. In many practical problems, the value of the update parameter $\beta$ is selected via a hybrid scheme, obtained from a combination of the fundamental update schemes, which include the aforementioned ones. Simple hybrid schemes are also used, e.g., ${\beta^{(k)} = \max\{0, \beta_{PR}^{(k)}\}}$.

	\section{Distributed Conjugate Gradient Method}
\label{sec:distirbuted_alg}
In this section, we derive a distributed optimization algorithm based on the nonlinear conjugate method for \eqref{eq:global_prob}. We assign a local copy of $x$ to each agent, representing its local estimate of the solution of the optimization problem, with each agent computing its conjugate directions locally. Agent $i$ maintains the variables: ${x_{i} \in \mbb{R}^{n}}$, ${s_{i} \in \mbb{R}^{n}}$, along with ${\alpha_{i} \in \mbb{R}_{+}}$, and ${\beta_{i} \in \mbb{R}}$. In addition, we denote the gradient of $f_{i}$ at $x_{i}$ by $g_{i}(x_{i})$. %

Before proceeding with the derivation, we introduce the following notation:
\begin{align*}
	\bm{x} &= \begin{bmatrix}
			\llongdash & x_{1}^{\T} & \rlongdash \\
			 & \vdots &  \\
			\llongdash & x_{N}^{\T} & \rlongdash
		\end{bmatrix}, \
	\bm{s} = \begin{bmatrix}
				\llongdash & s_{1}^{\T} & \rlongdash \\
		 & \vdots &  \\
				\llongdash & s_{N}^{\T} & \rlongdash
			\end{bmatrix}, \\
	\bm{g}(\bm{x}) &= \begin{bmatrix}
			\llongdash & \left(\nabla f_{1}(x_{1})\right)^{\T} & \rlongdash \\
			 & \vdots & \\
			\llongdash & \left(\nabla f_{N}(x_{N})\right)^{\T} & \rlongdash
	\end{bmatrix}, 
\end{align*}
${\bm{\alpha} = \diag(\alpha_{1},\ldots,\alpha_{N})}$, and ${\bm{\beta} = \diag(\beta_{1},\ldots,\beta_{N})}$,
where the variables are obtained by stacking the local variables of each agent, with ${\bm{x} \in \mbb{R}^{N \times n}}$, ${\bm{s} \in \mbb{R}^{N \times n}}$, ${\bm{g}(\bm{x}) \in \mbb{R}^{N \times n}}$, and ${\bm{\alpha} \in \mbb{R}^{N}}$. To simplify notation, we denote $\bm{g}(\bm{x}^{(k)})$ by $\bm{g}^{k}$. In addition, we note that all agents achieve \emph{consensus} when all the rows of $\bm{x}$ are the same. Moreover, \emph{optimality} is achieved when ${\bm{1}_{N}^{\T} \bm{g}(\bm{x}) = \bm{0}_{n}^{\T}}$, i.e., the first-order optimality condition is satisfied.
Further, we define the \emph{aggregate} objective function considering the local variables of each agent as:
\begin{equation}
	\bm{f}(\bm{x}) = \frac{1}{N} \sum_{i = 1}^{N} f_{i}(x_{i}).
\end{equation}

To obtain a distributed variant of the centralized conjugate gradient method, one could utilize the average consensus technique to eliminate the need for centralized procedures, yielding the distributed algorithm:
\begin{align}
	\label{eq:vanilla_distributed_method}
	\bm{x}^{(k+1)} &= W \bm{x}^{(k)} + \bm{\alpha}^{(k)} \cdot \bm{s}^{(k)}, \\
	\bm{s}^{(k + 1)} &= -\bm{g}^{(k+1)} + \bm{\beta}^{(k)} \cdot \bm{s}^{(k)},
\end{align}
which simplifies to:
\begin{align}
	\label{eq:vanilla_distributed_method_agent}
	x_{i}^{(k+1)} &= w_{ii} x_{i}^{(k)} + \sum_{j \in \mcal{N}_{i}} w_{ij} x_{j}^{(k)} + \alpha_{i}^{(k)} \cdot s_{i}^{(k)}, \\
 	s_{i}^{(k + 1)} &= -g_{i}^{(k+1)} + \beta_{i}^{(k)} \cdot s_{i}^{(k)},
\end{align}
when expressed with respect to agent $i$, with initialization ${x_{i}^{(0)} \in \mbb{R}^{n}}$, ${s_{i}^{(0)} = - \nabla f_{i}(x_{i}^{(0)})}$, and ${\alpha_{i}^{(0)} \in \mbb{R}_{+}}$.

One can show that the above distributed algorithm does not converge to the optimal solution with a non-diminishing step-size. Here, we provide a simple proof by contradiction showing that the optimal solution $x^{\star}$ is not a fixed point of the distributed algorithm \eqref{eq:vanilla_distributed_method}: Assume that $x^{\star}$ is a fixed point of the algorithm. With this assumption, the first-two terms on the right-hand side of \eqref{eq:vanilla_distributed_method_agent} simplify to $x^{\star}$. Further, the conjugate update parameter $\beta_{i}^{(k - 1)}$ simplifies to zero, where we define the ratio $\frac{0}{0}$ to be zero if the Fletcher-Reeves Scheme is utilized. However, in general, the local conjugate direction of agent $i$, denoted by $s_{i}^{(k)}$, may not be zero, since the critical point of the joint objective function $\frac{1}{N}\sum_{i = 1}^{N} f_{i}$ may not coincide with the critical point of $f_{i}$, i.e., ${\nabla f_{i}(x^{\star})}$ may not be zero. Consequently, the last term in \eqref{eq:vanilla_distributed_method} is not zero, in general, and as a result, agent $i$'s iterate $x_{i}^{(k+1)}$ deviates from $x^{\star}$, showing that $x^{\star}$ is not a fixed point of the distributed algorithm given by \eqref{eq:vanilla_distributed_method_agent}. This property mirrors that of distributed (sub)gradient methods where a diminishing step-size is required for convergence.

Further, we note that the last term in \eqref{eq:vanilla_distributed_method_agent} is zero if agent $i$ utilizes the average conjugate direction in place of its local conjugate direction. With this modified update procedure, the optimal solution $x^{\star}$ represents a fixed point of the resulting, albeit non-distributed, algorithm. To address this challenge, we assign an auxiliary variable $z$ to each agent, representing an estimate of the average conjugate direction, which is updated locally using dynamic average consensus \cite{zhu2010discrete}, yielding the \emph{Distributed Conjugate Gradient Method} (DC-Grad), given by:
\begin{align}
	\bm{x}^{(k+1)} &= W (\bm{x}^{(k)} + \bm{\alpha}^{(k)} \cdot \bm{z}^{(k)}), \label{eq:x_sequence} \\
	\bm{s}^{(k + 1)} &= -\bm{g}^{(k+1)} + \bm{\beta}^{(k)} \cdot \bm{s}^{(k)}, \label{eq:s_sequence} \\
	\bm{z}^{(k + 1)} &= W(\bm{z}^{(k)} + \bm{s}^{(k+1)} - \bm{s}^{(k)}), \label{eq:z_sequence}
\end{align}
which is initialized with ${x_{i}^{(0)} \in \mbb{R}^{n}}$, ${s_{i}^{(0)} = - \nabla f_{i}(x_{i}^{(0)})}$, ${z_{i}^{(0)} = s_{i}^{(0)}}$, and ${\alpha_{i}^{(0)} \in \mbb{R}_{+}}$,~${\forall i \in \mcal{V}}$.
 Using dynamic average consensus theory, we can show that the agents reach consensus with ${z_{i}^{(\infty)} = \bar{z}^{(\infty)} = \bar{s}^{(\infty)}}$,~${\forall i \in \mcal{V}}$. The resulting distributed conjugate gradient method enables each agent to compute the optimal solution of the optimization problem using \emph{uncoordinated}, \emph{non-diminishing} step-sizes.

Considering the update procedures in terms of each agent, at each iteration $k$, agent $i$ performs the following updates:
\begin{align}
	x_{i}^{(k +1)} &= \sum_{j \in \mcal{N}_{i} \cup \{i\}} w_{ij} \left(x_{j}^{(k)} + \alpha_{j}^{(k)} \cdot z_{j}^{(k)}\right),  \label{eq:x_update_procedure}\\
	s_{i}^{(k +1)} &= -g_{i}^{(k + 1)} + \beta_{i}^{(k)} \cdot s_{i}^{(k)}, \label{eq:s_update_procedure}\\
	z_{i}^{(k + 1)} &= \sum_{j \in \mcal{N}_{i} \cup \{i\}} w_{ij} \left(z_{j}^{(k)} + s_{j}^{(k + 1)} - s_{j}^{(k)} \right), \label{eq:z_update_procedure}
\end{align}
 where agent $i$ communicates:
 \begin{align}
 	u_{i}^{(k)} &= x_{i}^{(k)} + \alpha_{i}^{(k)} \cdot z_{i}^{(k)}, \\
 	v_{i}^{(k)} &= z_{i}^{(k)} + s_{i}^{(k + 1)} - s_{i}^{(k)},
 \end{align}
with its neighbors.

We summarize the distributed conjugate gradient algorithm in Algorithm~\ref{alg:distributed_algorithm}.

\begin{algorithm2e} [th]
	\label{alg:distributed_algorithm}
	\caption{Distributed Conjugate Gradient Method (DC-Grad)}

	\SetKwRepeat{doparallel}{do in parallel}{while}

	\textbf{Initialization:} \\
	{\addtolength{\leftskip}{1em}
		${x_{i}^{(0)} \in \mbb{R}^{n}}$, ${s_{i}^{(0)} = - \nabla f_{i}(x_{i}^{(0)})}$, ${z_{i}^{(0)} = s_{i}^{(0)}}$, and ${\alpha_{i}^{(0)} \in \mbb{R}_{+}}$,~${\forall i \in \mcal{V}}$.
	}

	\doparallel( $\forall i \in \mcal{V}$){not converged or stopping criterion is not met}{
		$x_{i}^{(k + 1)} \leftarrow $ Procedure \eqref{eq:x_update_procedure} \algsep
		$s_{i}^{(k + 1)} \leftarrow $ Procedure \eqref{eq:s_update_procedure} \algsep
		$z_{i}^{(k + 1)} \leftarrow $ Procedure \eqref{eq:z_update_procedure} \algsep
		$k \leftarrow k + 1$
	}

\end{algorithm2e}

We present some assumptions that will be relevant in analyzing the convergence properties of our algorithm.
\begin{assumption}
	\label{assm:convex_lipschitz}
	The local objective function of each agent, $f_{i}$, is closed, proper, and convex. Moreover, $f_{i}$ is $L_{i}$-Lipschitz-smooth, with Lipschitz-continuous gradients.
\end{assumption}

\begin{remark}
	From Assumption \ref{assm:convex_lipschitz}, we note that the aggregate objective function $\bm{f}$ is closed, convex, proper, and Lipschitz-continuous with:
	\begin{equation}
		\norm{\nabla{\bm{f}}(x) - \nabla{\bm{f}}(y)}_{2} \leq L \norm{x - y}_{2},\ \forall x, y \in \mbb{R}^{n},
	\end{equation}
	where ${L = \max_{i \in \mcal{V}} \{L_{i}\}}$.
\end{remark}

\begin{assumption}
	The local objective function of each agent $f_{i}$ is coercive.
\end{assumption}

\begin{assumption}
	The optimization problem \eqref{eq:global_prob} has a non-empty feasible set, and further, an optimal solution $x^{\star}$ exists for the optimization problem.
\end{assumption}

The aforementioned assumptions are standard in convergence analysis of distributed optimization algorithms.
	\section{Convergence Analysis}
\label{sec:convergence_analysis}

We analyze the convergence properties of our distributed algorithm.
Before proceeding with the analysis, we consider the following sequence:
\begin{align}
	\mean{\bm{x}}^{(k+1)} &= M W \bm{x}^{(k+1)} + M \bm{\alpha}^{(k)} \cdot \bm{z}^{(k)}, \\
	\mean{\bm{x}}^{(k+1)} &= \mean{\bm{x}}^{(k)} + \mean{\bm{\alpha}^{(k)} \cdot \bm{z}^{(k)}}, \label{eq:mean_x_sequence} \\
	\mean{\bm{s}}^{(k+1)} &= M \left(-\bm{g}^{(k+1)} + \bm{\beta}^{(k)} \cdot \bm{s}^{(k)}\right), \\
	\mean{\bm{s}}^{(k+1)} &= -\mean{\bm{g}}^{(k+1)} + \mean{\bm{\beta}^{(k)} \cdot \bm{s}^{(k)}}, \label{eq:mean_s_sequence} \\
	\mean{\bm{z}}^{(k+1)} &= M \left(W(\bm{z}^{(k)} + \bm{s}^{(k+1)} - \bm{s}^{(k)})\right), \\
	\mean{\bm{z}}^{(k+1)} &= \mean{\bm{z}}^{(k)} +  \mean{\bm{s}}^{(k+1)} -  \mean{\bm{s}}^{(k)}, \label{eq:mean_z_sequence}
\end{align}
derived from the mean of the local iterates of each agent, where we have utilized the assumption that $W$ is column-stochastic. From \eqref{eq:mean_z_sequence} , we note that ${\mean{z}^{(k)} = \mean{s}^{(k)}}$,~$\forall k$, given that ${\mean{z}^{(0)} = \mean{s}^{(0)}}$.

In addition, we introduce the following definitions: ${\alpha_{\max} = \max_{k \in \mbb{Z}_{+}} \{\norm{\bm{\alpha}^{(k)}}_{2}\}}$; ${\beta_{\max} = \max_{k \in \mbb{Z}_{+}} \{\norm{\bm{\beta}^{(k)}}_{2}\}}$; ${r_{ \alpha} = \alpha_{\max} \max_{k \in \mbb{Z}_{+}} \frac{ 1}{\norm{\mean{\bm{\alpha}}^{(k)}}_{2}}}$; ${r_{\beta} = \beta_{\max} \max_{k \in \mbb{Z}_{+}} \frac{ 1}{\norm{\mean{\bm{\beta}}^{(k)}}_{2}}}$.
Likewise, we define ${\mean{\bm{\alpha}}^{(k)} =   \frac{1}{N} \sum_{i \in \mcal{V}} \alpha_{i}^{(k)} I_{N}}$, with a similar definition for ${\mean{\bm{\beta}}^{(k)}}$. We state the following lemma, bounding the norm of the sequences ${\{\tilde{\bm{x}}^{(k)}\}_{\forall k}}$, ${\{\tilde{\bm{z}}^{(k)}\}_{\forall k}}$, and ${\{\tilde{\bm{s}}^{(k)}\}_{\forall k}}$.

\begin{lemma}
	\label{lem:sequence_bounds}
	If the sequences ${\{\bm{x}^{(k)}\}_{\forall k}}$, ${\{\bm{s}^{(k)}\}_{\forall k}}$, and ${\{\bm{z}^{(k)}\}_{\forall k}}$ are generated by the recurrence in \eqref{eq:x_sequence}, \eqref{eq:s_sequence}, and \eqref{eq:z_sequence}, the auxiliary sequences ${\{\tilde{\bm{x}}^{(k)}\}_{\forall k}}$, ${\{\tilde{\bm{s}}^{(k)}\}_{\forall k}}$, and ${\{\tilde{\bm{z}}^{(k)}\}_{\forall k}}$ satisfy the following bounds:
	\begin{align}
		\norm{\tilde{\bm{x}}^{(k+1)}}_{2} & \leq \lambda \norm{\tilde{\bm{x}}^{(k)}}_{2} + \lambda  \alpha_{\max} (1 +r_{\alpha}) \norm{\tilde{\bm{z}}^{(k)}}_{2} \notag \\
		& \quad + \lambda r_{\alpha} \norm{\mean{\bm{\alpha}^{(k)} \cdot \bm{z}^{(k)}}}_{2}, \\
		\norm{\tilde{\bm{s}}^{(k+1)}}_{2} &\leq \norm{\tilde{\bm{g}}^{(k + 1)}}_{2} + \beta_{\max} (1 + r_{\beta}) \norm{\tilde{\bm{s}}^{(k)}}_{2} \notag \\
		& \quad + \left( 1 + r_{\beta} \right) \norm{\mean{\bm{\beta}^{(k)} \cdot \bm{s}^{(k)}}}_{2}, \\
		\norm{\tilde{\bm{z}}^{(k+1)}}_{2} &\leq (\lambda + \lambda^{2} L \alpha_{\max} ( 1 + r_{\alpha})) \norm{\tilde{\bm{z}}^{(k)}}_{2} \notag \\
		& \quad +  \lambda L ( \lambda  + 1) \norm{\tilde{\bm{x}}^{(k)}}_{2} \notag \\
		& \quad + \lambda  L (\lambda r_{\alpha} + 1)  \norm{\mean{\bm{\alpha}^{(k)} \cdot \bm{z}^{(k)}}}_{2}  \notag \\
		& \quad + \lambda\left( \norm{\bm{\beta}^{(k)} \cdot \bm{s}^{(k)}}_{2} + \norm{\bm{\beta}^{(k - 1)} \cdot \bm{s}^{(k - 1)}}_{2} \right).
	\end{align}
\end{lemma}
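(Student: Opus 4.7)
The plan is to prove all three bounds by a common two-step recipe. First, project each recurrence onto the disagreement subspace by applying $I - \tfrac{1}{N}\bm{1}_N\bm{1}_N^{\T}$, exploiting the identities $(I - \tfrac{1}{N}\bm{1}_N\bm{1}_N^{\T})W = M$ (which follows from $\bm{1}_N^{\T}W = \bm{1}_N^{\T}$) and $M\mean{\bm{y}} = \bm{0}$ for any matrix $\bm{y}$. Second, bound the resulting expressions with $\|M\|_{2} = \lambda$ together with a step-size renormalization trick that introduces $r_\alpha$ (and analogously $r_\beta$).

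For $\tilde{\bm{x}}^{(k+1)}$, projection of \eqref{eq:x_sequence} yields $\tilde{\bm{x}}^{(k+1)} = M\tilde{\bm{x}}^{(k)} + M\bm{\alpha}^{(k)}\bm{z}^{(k)}$, so the spectral norm gives the $\lambda\|\tilde{\bm{x}}^{(k)}\|_{2}$ term directly. To convert the remaining $\lambda\|\bm{\alpha}^{(k)}\bm{z}^{(k)}\|_{2}$ into the advertised combination, I would split $\bm{z}^{(k)} = \tilde{\bm{z}}^{(k)} + \mean{\bm{z}}^{(k)}$, bound $\|\bm{\alpha}^{(k)}\tilde{\bm{z}}^{(k)}\|_{2} \leq \alpha_{\max}\|\tilde{\bm{z}}^{(k)}\|_{2}$ at once, and control $\|\mean{\bm{z}}^{(k)}\|_{2}$ via the identity $\mean{\bm{\alpha}^{(k)} \cdot \mean{\bm{z}}^{(k)}} = \mean{\bm{\alpha}}^{(k)}\mean{\bm{z}}^{(k)}$; this yields $\|\mean{\bm{\alpha}}^{(k)}\mean{\bm{z}}^{(k)} - \mean{\bm{\alpha}^{(k)} \cdot \bm{z}^{(k)}}\|_{2} = \|\mean{\bm{\alpha}^{(k)} \cdot \tilde{\bm{z}}^{(k)}}\|_{2} \leq \alpha_{\max}\|\tilde{\bm{z}}^{(k)}\|_{2}$. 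Rearranging, dividing by $\|\mean{\bm{\alpha}}^{(k)}\|_{2}$, and using $1/\|\mean{\bm{\alpha}}^{(k)}\|_{2} \leq r_\alpha/\alpha_{\max}$ produces $\|\mean{\bm{z}}^{(k)}\|_{2} \leq r_\alpha\|\tilde{\bm{z}}^{(k)}\|_{2} + (r_\alpha/\alpha_{\max})\|\mean{\bm{\alpha}^{(k)} \cdot \bm{z}^{(k)}}\|_{2}$, from which the first bound assembles.

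Projecting \eqref{eq:s_sequence} gives $\tilde{\bm{s}}^{(k+1)} = -\tilde{\bm{g}}^{(k+1)} + (\bm{\beta}^{(k)}\bm{s}^{(k)} - \mean{\bm{\beta}^{(k)} \cdot \bm{s}^{(k)}})$; since no $W$ appears, there is no $\lambda$ prefactor. Splitting the bracketed term by the triangle inequality into $\|\bm{\beta}^{(k)}\bm{s}^{(k)}\|_{2} + \|\mean{\bm{\beta}^{(k)} \cdot \bm{s}^{(k)}}\|_{2}$ and applying the above renormalization with $r_\beta$ in place of $r_\alpha$ yields the $(1 + r_\beta)$ coefficients. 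Projecting \eqref{eq:z_sequence} gives $\tilde{\bm{z}}^{(k+1)} = M\tilde{\bm{z}}^{(k)} + M(\bm{s}^{(k+1)} - \bm{s}^{(k)})$; substituting $\bm{s}^{(k+1)} - \bm{s}^{(k)} = -(\bm{g}^{(k+1)} - \bm{g}^{(k)}) + \bm{\beta}^{(k)}\bm{s}^{(k)} - \bm{\beta}^{(k-1)}\bm{s}^{(k-1)}$ and invoking $L$-smoothness to bound $\|\bm{g}^{(k+1)} - \bm{g}^{(k)}\|_{2} \leq L\|\bm{x}^{(k+1)} - \bm{x}^{(k)}\|_{2}$ reduces matters to controlling the increment. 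Writing $W = M + \tfrac{1}{N}\bm{1}_N\bm{1}_N^{\T}$ and using $(W-I)\mean{\bm{x}}^{(k)} = \bm{0}$ yields $\bm{x}^{(k+1)} - \bm{x}^{(k)} = (M-I)\tilde{\bm{x}}^{(k)} + M\bm{\alpha}^{(k)}\bm{z}^{(k)} + \mean{\bm{\alpha}^{(k)} \cdot \bm{z}^{(k)}}$, and reusing the first-bound estimate for $\|M\bm{\alpha}^{(k)}\bm{z}^{(k)}\|_{2}$ delivers the third bound after collecting terms, with the residual $\lambda(\|\bm{\beta}^{(k)}\cdot\bm{s}^{(k)}\|_{2} + \|\bm{\beta}^{(k-1)}\cdot\bm{s}^{(k-1)}\|_{2})$ coming from $\|M(\bm{\beta}^{(k)}\bm{s}^{(k)} - \bm{\beta}^{(k-1)}\bm{s}^{(k-1)})\|_{2}$.

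The main obstacle will be the step-size renormalization. With coordinated step-sizes one could simply factor out a common $\alpha$, but because $\bm{\alpha}^{(k)}$ carries agent-dependent diagonal entries, $\mean{\bm{\alpha}^{(k)} \cdot \bm{z}^{(k)}}$ is in general \emph{not} proportional to $\mean{\bm{z}}^{(k)}$. The identity $\mean{\bm{\alpha}^{(k)} \cdot \mean{\bm{z}}^{(k)}} = \mean{\bm{\alpha}}^{(k)}\mean{\bm{z}}^{(k)}$ and the resulting rank-one estimate $\|\mean{\bm{\alpha}}^{(k)}\mean{\bm{z}}^{(k)} - \mean{\bm{\alpha}^{(k)} \cdot \bm{z}^{(k)}}\|_{2} \leq \alpha_{\max}\|\tilde{\bm{z}}^{(k)}\|_{2}$ — together with renormalization by $\|\mean{\bm{\alpha}}^{(k)}\|_{2}$ — are precisely what force the $r_\alpha$ (analogously $r_\beta$) factor into the final estimates; everything else reduces to routine applications of the triangle inequality, the mixing-matrix spectral bound, and Lipschitz continuity.
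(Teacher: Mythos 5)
Your proposal is correct and follows essentially the same route as the paper's proof: project each recurrence onto the disagreement subspace via $M$, bound with $\rho(M)=\lambda$, and recover $\|\mean{\bm{z}}^{(k)}\|_2$ (resp.\ $\|\mean{\bm{s}}^{(k)}\|_2$) from the identity $\mean{\bm{\alpha}^{(k)}\cdot\bm{z}^{(k)}}=\mean{\bm{\alpha}}^{(k)}\mean{\bm{z}}^{(k)}+\mean{\tilde{\bm{\alpha}}^{(k)}\cdot\tilde{\bm{z}}^{(k)}}$, which is exactly how the paper introduces $r_\alpha$ and $r_\beta$. Your handling of the third bound by expanding $\bm{x}^{(k+1)}-\bm{x}^{(k)}=(M-I)\tilde{\bm{x}}^{(k)}+M\bm{\alpha}^{(k)}\bm{z}^{(k)}+\mean{\bm{\alpha}^{(k)}\cdot\bm{z}^{(k)}}$ is algebraically identical to the paper's substitution of the $\tilde{\bm{x}}^{(k+1)}$ bound into $\|\tilde{\bm{x}}^{(k+1)}\|_2+\|\tilde{\bm{x}}^{(k)}\|_2+\|\mean{\bm{\alpha}^{(k)}\cdot\bm{z}^{(k)}}\|_2$, yielding the same coefficients.
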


\begin{proof}
        \renewcommand{\thesubsection}{\Alph{subsection}}
	Please refer to \hyperref[appdx:lemma_sequence_bounds]{Appendix~\ref*{appdx:lemma_sequence_bounds}} for the proof.
\end{proof}

Further, all agents reach agreement on their local iterates, which we state in the following theorem.

\begin{theorem}[Agreement]
	\label{thm:agreement}
	Given the recurrence \eqref{eq:x_sequence}, \eqref{eq:s_sequence}, and \eqref{eq:z_sequence}, the local iterates of agent $i$, ${\left(x_{i}^{(k)}, s_{i}^{(k)}, z_{i}^{(k)}\right)}$, converge to the mean, ${\forall i \in \mcal{V}}$, i.e., each agent reaches agreement with all other agents, for sufficiently large $k$. In particular:
	\begin{align}
		\lim_{k \rightarrow \infty} \norm{\tilde{\bm{s}}^{(k)}}_{2} = 0, \
		\lim_{k \rightarrow \infty} \norm{\tilde{\bm{x}}^{(k)}}_{2} = 0, \
		\lim_{k \rightarrow \infty} \norm{\tilde{\bm{z}}^{(k)}}_{2} = 0. \label{eq:tilde_errors_theorem}
	\end{align}
	Further, the local iterates of each agent converge to a limit point, as ${k \rightarrow \infty}$, with:
	\begin{align}
		\lim_{k \rightarrow \infty} \norm{\mean{\bm{\alpha}^{(k)} \cdot \bm{z}^{(k)}}}_{2}
		= \lim_{k \rightarrow \infty} \norm{\mean{\bm{\beta}^{(k)} \cdot \bm{s}^{(k)}}}_{2}
		= 0.
	\end{align}
	Moreover, the norm of the mean of the agents' local iterates tracking the average conjugate direction converges to zero, with the norm of the average gradient evaluated at the local iterate of each agent also converging to zero, for sufficiently large $k$. Specifically, the following holds:
	\begin{align}
		\lim_{k \rightarrow \infty} \norm{\mean{\bm{s}}^{(k)}}_{2} = 0, \
		\lim_{k \rightarrow \infty} \norm{\mean{\bm{z}}^{(k)}}_{2} = 0, \
		\lim_{k \rightarrow \infty} \norm{\mean{\bm{g}}^{(k)}}_{2} = 0. \label{eq:mean_gradient_direction_theorem}
	\end{align}
\end{theorem}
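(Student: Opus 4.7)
The plan is to combine Lemma \ref{lem:sequence_bounds} with $L$-Lipschitz smoothness and a Lyapunov-type descent argument on the mean iterate $\mean{\bm{x}}^{(k)}$. First I would bound $\norm{\tilde{\bm{g}}^{(k+1)}}_{2}$ via per-agent Lipschitz smoothness (so that $\norm{\tilde{\bm{g}}^{(k+1)}}_{2}$ is controlled by $L\,\norm{\tilde{\bm{x}}^{(k+1)}}_{2}$ plus a bounded gradient-dispersion residual at the consensus point), substitute this into the bound on $\norm{\tilde{\bm{s}}^{(k+1)}}_{2}$ given by Lemma \ref{lem:sequence_bounds}, and eliminate the resulting $\norm{\tilde{\bm{x}}^{(k+1)}}_{2}$ using the first inequality of the same lemma. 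Stacking $\norm{\tilde{\bm{x}}^{(k)}}_{2}, \norm{\tilde{\bm{s}}^{(k)}}_{2}, \norm{\tilde{\bm{z}}^{(k)}}_{2}$ into a single vector $e^{(k)}$ then yields a coupled linear recurrence $e^{(k+1)} \leq A\, e^{(k)} + b^{(k)}$, where $A = A(\lambda, L, \alpha_{\max}, \beta_{\max})$ and $b^{(k)}$ collects the mean-driven forcing terms $\norm{\mean{\bm{\alpha}^{(k)} \cdot \bm{z}^{(k)}}}_{2}$ and $\norm{\mean{\bm{\beta}^{(k)} \cdot \bm{s}^{(k)}}}_{2}$ (along with the lagged term $\norm{\mean{\bm{\beta}^{(k-1)} \cdot \bm{s}^{(k-1)}}}_{2}$).

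The principal technical obstacle is ensuring $\rho(A) < 1$. Since $\lambda < 1$ by Assumption \ref{assm:mixing_matrix} and the off-diagonal couplings in $A$ scale with $\alpha_{\max}$ and $\beta_{\max}$, a Gershgorin-disk or direct characteristic-polynomial analysis should yield stability provided $\alpha_{\max}$ is taken sufficiently small and $\beta_{\max}$ remains bounded, the latter being standard under safeguarded CG schemes such as $\beta^{(k)} = \max\{0, \beta_{PR}^{(k)}\}$. Stability then gives a bounded-input--bounded-output estimate $\norm{e^{(k)}}_{2} \leq C \sum_{j = 0}^{k-1} \rho(A)^{k-1-j} \norm{b^{(j)}}_{2}$, reducing \eqref{eq:tilde_errors_theorem} to establishing the vanishing (or summability) of $\norm{b^{(k)}}_{2}$.

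To handle those forcing terms, I would apply the $L$-smoothness of $\bm{f}$ to the mean update $\mean{\bm{x}}^{(k+1)} = \mean{\bm{x}}^{(k)} + \mean{\bm{\alpha}^{(k)} \cdot \bm{z}^{(k)}}$ to obtain
\begin{equation*}
\bm{f}(\mean{\bm{x}}^{(k+1)}) \leq \bm{f}(\mean{\bm{x}}^{(k)}) + \langle \mean{\bm{g}}^{(k)}, \mean{\bm{\alpha}^{(k)} \cdot \bm{z}^{(k)}} \rangle + \tfrac{L}{2}\,\norm{\mean{\bm{\alpha}^{(k)} \cdot \bm{z}^{(k)}}}_{2}^{2} + O(\norm{\tilde{\bm{x}}^{(k)}}_{2}^{2}),
\end{equation*}
where the last term accounts for the gap between $\mean{\bm{g}}^{(k)}$ and the gradient of $\bm{f}$ evaluated at the consensus point $\mean{\bm{x}}^{(k)}$. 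Using the identity $\mean{\bm{z}}^{(k)} = \mean{\bm{s}}^{(k)} = -\mean{\bm{g}}^{(k)} + \mean{\bm{\beta}^{(k-1)} \cdot \bm{s}^{(k-1)}}$ (established after \eqref{eq:mean_z_sequence}) to split $\mean{\bm{\alpha}^{(k)} \cdot \bm{z}^{(k)}}$ into a dominant piece $\mean{\bm{\alpha}}^{(k)} \mean{\bm{z}}^{(k)}$ and a cross-term of order $\alpha_{\max}(1+r_{\alpha})\norm{\tilde{\bm{z}}^{(k)}}_{2}$, the inner-product term produces a descent contribution $-c_{1}\norm{\mean{\bm{g}}^{(k)}}_{2}^{2}$ plus perturbations that scale with $\norm{e^{(k)}}_{2}$ and $\norm{\mean{\bm{\beta}^{(k-1)} \cdot \bm{s}^{(k-1)}}}_{2}$. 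Coercivity of $\bm{f}$ then confines $\mean{\bm{x}}^{(k)}$ to a compact sublevel set, and telescoping the descent inequality delivers summability of $\norm{\mean{\bm{g}}^{(k)}}_{2}^{2}$ and of the squared forcing terms, after absorbing consensus errors using the linear recursion derived in the first paragraph.

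Combining the two chains closes the argument: forcing-term summability drives $e^{(k)} \to 0$ through the linear recursion, establishing \eqref{eq:tilde_errors_theorem}; boundedness of $\bm{s}^{(k)}, \bm{z}^{(k)}$ (inherited from the compact sublevel set and $L$-smoothness) together with $\norm{\tilde{\bm{s}}^{(k)}}_{2}, \norm{\tilde{\bm{z}}^{(k)}}_{2} \to 0$ yields $\norm{\mean{\bm{\alpha}^{(k)} \cdot \bm{z}^{(k)}}}_{2}, \norm{\mean{\bm{\beta}^{(k)} \cdot \bm{s}^{(k)}}}_{2} \to 0$; and finally the mean recursion $\mean{\bm{s}}^{(k+1)} = -\mean{\bm{g}}^{(k+1)} + \mean{\bm{\beta}^{(k)} \cdot \bm{s}^{(k)}}$ together with $\norm{\mean{\bm{g}}^{(k)}}_{2} \to 0$ gives $\mean{\bm{s}}^{(k)}, \mean{\bm{z}}^{(k)} \to 0$, proving \eqref{eq:mean_gradient_direction_theorem}.
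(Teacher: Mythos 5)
Your overall architecture mirrors the paper's: couple the three consensus-error recursions of Lemma~\ref{lem:sequence_bounds} through a small-gain argument, apply $L$-smoothness of $\bm{f}$ along the mean update \eqref{eq:mean_x_sequence} to obtain a function-value inequality, and let boundedness/summability of the forcing terms drive the consensus errors to zero. The formal machinery differs: you stack the errors into ${e^{(k+1)} \leq A\, e^{(k)} + b^{(k)}}$ and ask for ${\rho(A) < 1}$ plus an ISS/BIBO estimate, whereas the paper works with the cumulative root-sum sequences $X^{(k)}, S^{(k)}, Z^{(k)}, R^{(k)}$ and a lemma from \cite{xu2015augmented} that converts each geometric recursion into an inequality among these accumulants, then bounds the master sequence $R^{(t)}$ by reading the function-value inequality as a quadratic constraint on $R^{(t)}$, and finishes with the monotone convergence theorem. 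Your smallness condition on $\alpha_{\max}$ plays the role of the paper's implicit conditions ${\beta_{\max}(1 + r_{\beta}) < 1}$, ${\lambda + \lambda^{2} L \alpha_{\max}(1 + r_{\alpha}) < 1}$, and ${a_{1} < 0}$.

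The genuine gap is your treatment of $\norm{\tilde{\bm{g}}^{(k+1)}}_{2}$. You propose to bound it by $L \norm{\tilde{\bm{x}}^{(k+1)}}_{2}$ plus a ``bounded gradient-dispersion residual at the consensus point.'' That residual is $\norm{(I - \frac{1}{N}\bm{1}_{N}\bm{1}_{N}^{\T})\,\bm{g}(\mean{\bm{x}}^{(k+1)})}_{2}$, the spread of the local gradients $\nabla f_{i}$ evaluated at a common point; it is bounded on a compact set but does not tend to zero, since the $f_{i}$ generally have distinct critical points. Feeding this into the $\tilde{\bm{s}}$ recursion of Lemma~\ref{lem:sequence_bounds} leaves a persistent, non-vanishing component in your forcing vector $b^{(k)}$, and your ISS estimate then yields only $\limsup_{k} \norm{\tilde{\bm{s}}^{(k)}}_{2} = O(\text{dispersion})$ rather than $\lim_{k} \norm{\tilde{\bm{s}}^{(k)}}_{2} = 0$ as the theorem asserts. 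The paper avoids this by never relating $\tilde{\bm{g}}$ to $\tilde{\bm{x}}$: it places $\norm{\tilde{\bm{g}}^{(l+1)}}_{2}^{2}$ directly inside the master accumulant $R^{(k)}$, derives $\lim_{t} R^{(t)} \leq \Sigma < \infty$ from the function-value inequality (under its sign condition on $a_{1}$), and concludes $\norm{\tilde{\bm{g}}^{(k+1)}}_{2} \rightarrow 0$ as the summand of a convergent series --- precisely the ingredient your decomposition cannot produce. (Separately, your descent inequality extracts a $-c_{1}\norm{\mean{\bm{g}}^{(k)}}_{2}^{2}$ term and infers summability of $\norm{\mean{\bm{g}}^{(k)}}_{2}^{2}$; the paper does not do this --- it obtains $\norm{\mean{\bm{g}}^{(k)}}_{2} \rightarrow 0$ only at the end, from $\mean{\bm{s}}^{(k)} \rightarrow 0$ and $\mean{\bm{\beta}^{(k-1)} \cdot \bm{s}^{(k-1)}} \rightarrow 0$ via \eqref{eq:mean_s_sequence}.)
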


\begin{proof}
	We refer readers to \hyperref[appdx:thm_agreement]{Appendix~\ref*{appdx:thm_agreement}} for the proof.
\end{proof}

Theorem \ref{thm:agreement} indicates that the local iterates of all agents, ${\{x_{i}^{(k)}}\}_{\forall i \in \mcal{V}}$, converge to a common limit point $x^{(\infty)}$, given by the mean ${\sum_{i \in \mcal{V}} x_{i}^{(k)}}$, as ${k \rightarrow \infty}$. Further:
\begin{equation}
	\label{eq:mean_gradient_direction_simplified}
	\begin{aligned}
		\lim_{k \rightarrow \infty} \norm{\mean{\bm{g}}^{(k)}}_{2} &= \lim_{k \rightarrow \infty} \norm{\frac{\bm{1}_{N}\bm{1}_{N}^{\T}}{N} \bm{g}^{(k)}}_{2}, \\
		&= \norm{\frac{\bm{1}_{N}\bm{1}_{N}^{\T}}{N} \bm{g}(\bm{x}^{(\infty)})}_{2}, \\
		&= \norm{\bm{1}_{N} \left(\nabla f(x^{(\infty)})\right)^{\T}}_{2}, \\
		&= \norm{\bm{1}_{N}}_{2} \norm{\nabla f(x^{(\infty)})}_{2}, \\
		&= \sqrt{N} \norm{\nabla f(x^{(\infty)})}_{2},
	\end{aligned}
\end{equation}
where ${\bm{x}^{(\infty)} = \bm{1}_{N} \left(x^{(\infty)}\right)^{\T}}$. From \eqref{eq:mean_gradient_direction_theorem} and \eqref{eq:mean_gradient_direction_simplified}, we note that ${\norm{\nabla f(x^{(\infty)})}_{2} = 0}$. Hence, the limit point of the distributed algorithm represents a critical point of the optimization problem \eqref{eq:global_prob}.

\begin{theorem}[Convergence of the Objective Value]
	\label{thm:convergence}
	The value of the objective function $\bm{f}$ evaluated at the mean of the local iterates of all agents converges to the optimal objective value. Moreover, the value of $\bm{f}$ evaluated at the agents' local iterates converges to the optimal objective value, for sufficiently large $k$. Particularly:
	\begin{equation}
	\lim_{k \rightarrow \infty} \bm{f}({\bm{x}}^{(k)}) = \lim_{k \rightarrow \infty} \bm{f}(\mean{\bm{x}}^{(k)}) = f^{\star},
	\end{equation}
\end{theorem}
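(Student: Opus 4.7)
The plan is to leverage Theorem~\ref{thm:agreement}, which already does most of the heavy lifting, and then combine it with convexity of the aggregate objective and continuity of each $f_{i}$ to promote iterate-level convergence into objective-value convergence.

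First, I would invoke Theorem~\ref{thm:agreement} to obtain a common limit point: each agent's iterate $x_{i}^{(k)}$ converges to some $x^{(\infty)} \in \mbb{R}^{n}$ as $k \to \infty$, the consensus violation $\norm{\tilde{\bm{x}}^{(k)}}_{2}$ vanishes, and the norm of the averaged gradient $\norm{\mean{\bm{g}}^{(k)}}_{2}$ tends to zero. The simplification carried out in \eqref{eq:mean_gradient_direction_simplified} then implies $\norm{\nabla f(x^{(\infty)})}_{2} = 0$, so $x^{(\infty)}$ is a critical point of the aggregate objective $f(x) = \frac{1}{N}\sum_{i=1}^{N} f_{i}(x)$.

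Next, I would upgrade ``critical point'' to ``global minimizer'' using convexity. Under Assumption~\ref{assm:convex_lipschitz}, each $f_{i}$ is convex, and so $f$ is convex as a nonnegative combination of convex functions. Because any critical point of a convex, differentiable function is a global minimizer, it follows that $f(x^{(\infty)}) = f^{\star}$ and $x^{(\infty)}$ lies in the optimal set of \eqref{eq:global_prob}.

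Finally, I would close the loop by continuity. Since each $f_{i}$ is $L_{i}$-smooth it is in particular continuous, and $x_{i}^{(k)} \to x^{(\infty)}$ yields $f_{i}(x_{i}^{(k)}) \to f_{i}(x^{(\infty)})$ for every $i \in \mcal{V}$. Averaging over $i$ gives $\bm{f}(\bm{x}^{(k)}) = \frac{1}{N}\sum_{i=1}^{N} f_{i}(x_{i}^{(k)}) \to f(x^{(\infty)}) = f^{\star}$. The componentwise mean $\mean{x}^{(k)} = \frac{1}{N}\sum_{i=1}^{N} x_{i}^{(k)}$ likewise converges to $x^{(\infty)}$, so the same continuity argument applied to the common row of $\mean{\bm{x}}^{(k)}$ gives $\bm{f}(\mean{\bm{x}}^{(k)}) \to f^{\star}$. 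The only genuinely substantive step is identifying $x^{(\infty)}$ with a minimizer rather than merely a stationary point, which is handled cleanly by convexity; had Theorem~\ref{thm:agreement} supplied only subsequential convergence, the main obstacle would instead have been a Cauchy-type argument on $\mean{\bm{x}}^{(k)}$ based on summability of the perturbation $\mean{\bm{\alpha}^{(k)} \cdot \bm{z}^{(k)}}$.
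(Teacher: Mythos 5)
Your argument is correct in outline but takes a genuinely different route from the paper, and it leans on a stronger fact than the paper's proof actually uses. The paper never passes through a limit point $x^{(\infty)}$: instead it applies the convexity inequality ${\bm{f}(\mean{\bm{x}}^{(k)}) - f^{\star} \leq \frac{1}{N}\trace{\bm{g}(\mean{\bm{x}}^{(k)}) \cdot (\mean{\bm{x}}^{(k)} - \bm{x}^{\star})^{\T}}}$, splits $\bm{g}(\mean{\bm{x}}^{(k)})$ into $\mean{\bm{g}}(\bm{x}^{(k)})$ plus a Lipschitz error controlled by $\norm{\tilde{\bm{x}}^{(k)}}_{2}$, and uses coercivity together with the bound \eqref{eq:objective_bound} to ensure $\norm{\mean{\bm{x}}^{(k)} - \bm{x}^{\star}}_{2}$ stays bounded; since $\norm{\mean{\bm{g}}^{(k)}}_{2} \to 0$ and $\norm{\tilde{\bm{x}}^{(k)}}_{2} \to 0$ by Theorem~\ref{thm:agreement}, the right-hand side vanishes and $\bm{f}(\mean{\bm{x}}^{(k)}) \to f^{\star}$ without ever needing the sequence $\mean{\bm{x}}^{(k)}$ to converge. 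The second half (passing from $\mean{\bm{x}}^{(k)}$ to $\bm{x}^{(k)}$) is done via the mean-value theorem and $\norm{\tilde{\bm{x}}^{(k)}}_{2} \to 0$, which is essentially your continuity step. The one weak link in your version is the very first move: you invoke convergence of $x_{i}^{(k)}$ to a single point $x^{(\infty)}$. While the prose of Theorem~\ref{thm:agreement} does assert this, what the analysis rigorously delivers is square-summability of the increments $\mean{\bm{\alpha}^{(k)} \cdot \bm{z}^{(k)}}$ (via the bound on $R^{(t)}$), which gives boundedness and vanishing increments but not a Cauchy property for $\mean{\bm{x}}^{(k)}$ — so, as you yourself anticipate in your closing caveat, only subsequential convergence is on firm ground, and the summability you would need for a Cauchy argument is not established. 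The paper's route buys robustness precisely here: it needs only boundedness (from coercivity) and the two vanishing norms, at the price of an extra Lipschitz decomposition; your route is shorter and more conceptual (critical point of a convex function is a global minimizer) but is only as solid as the iterate-convergence claim it rests on.
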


\begin{proof}
	We provide the proof in \hyperref[appdx:thm_convergence]{Appendix~\ref*{appdx:thm_convergence}}.
\end{proof}

	\section{Simulations}
\label{sec:simulations}
In this section, we examine the performance of our distributed conjugate gradient method (\mbox{DC-GRAD}) in comparison to other existing distributed optimization algorithms, namely: \mbox{DIGing-ATC} \cite{nedic2017achieving}, C-ADMM \cite{mateos2010distributed}, $AB$/Push-Pull \cite{xin2020general}, and $ABm$ \cite{xin2019distributed}, which utilizes \emph{momentum} acceleration to achieve faster convergence. We note that $AB$/Push-Pull reduces to DIGing-CTA when the matrices $A$ and $B$ are selected to be doubly-stochastic. We assess the convergence rate of our algorithm across a range of communication networks, with varying degrees of connectivity, described by the connectivity
ratio ${\kappa = \frac{2 \vert \mcal{E} \vert}{N(N - 1)}}$. We consider a \emph{state estimation problem}, formulated as a least-squares optimization problem, in addition to its robust variant derived with the Huber loss function. In each problem, we utilize \emph{Metropolis-Hastings} weights for the mixing matrix $W$. Since Metropolis-Hastings weights yield doubly-stochastic (DS) mixing matrices, we use the terms $ABm$ and $ABm$-DS interchangeably. We compute the convergence error of the local iterate of each agent to the optimal solution, in terms of the \emph{relative-squared error} (RSE) given by:
\begin{equation}
	\mathrm{RSE} = \frac{\norm{x_{i} - x^{\star}}_{2}}{\norm{x^{\star}}_{2}},
\end{equation}
where $x_{i}$ denotes the local iterate of agent $i$ and $x^{\star}$ denotes the optimal solution, computed from the aggregate optimization problem. We set the threshold for convergence at $1e^{-13}$. For a good comparison of the computation and communication overhead incurred by each method, we selected a convergence threshold that could be attained by all methods. In our simulation study, we note that \mbox{DIGing-ATC} and \mbox{DC-GRAD} yield higher-accuracy solutions compared to the other methods, with $AB$/Push-Pull yielding solutions with the least accuracy.

We utilize the \emph{golden-section search} to select an optimal step-size for our distributed conjugate gradient method, \mbox{DIGing-ATC}, $AB$/Push-Pull, and $ABm$. Likewise, we select an optimal value for the penalty parameter $\rho$ in C-ADMM using golden-section search. Further, we assume that each scalar component in the agents' iterates is represented using the double-precision floating-point representation format.

\subsection{Distributed State Estimation}
\label{sec:dis_state_estimation}
In the state estimation problem, we seek to compute an estimate of a parameter (\emph{state}) given a set of observations (\emph{measurements}). In many situations (e.g., in robotics, process control, and finance), the observations are collected by a network of sensors, resulting in decentralization of the problem data, giving rise to the distributed state estimation problem.
Here, we consider the distributed state estimation problem over a network of $N$ agents, where the agents estimate the state ${x \in \mbb{R}^{n}}$, representing the parameter of interest, such as the location of a target. Each agent makes noisy observations of the state, given by the model: ${y_{i} = C_{i} x + w_{i}}$, where ${y_{i} \in \mbb{R}^{m_{i}}}$ denotes the observations of agent $i$, ${C_{i} \in \mbb{R}^{m_{i} \times n}}$ denotes the observation (measurement) matrix, and $w_{i}$ denotes random noise. We can formulate the state estimation problem as a least-squares optimization problem, given by:
\begin{equation}
	\label{eq:state_estimation_least_squares}
	\minimize{x \in \mbb{R}^{n}} \frac{1}{N} \sum_{i = 1}^{N} \norm{C_{i}x - y_{i}}_{2}^{2}.
\end{equation}

We determine the number of local observations for each agent randomly by sampling from the uniform distribution over the closed interval $[5, 30]$. We randomly generate the problem data: $C_{i}$ and $y_{i}$,~ ${\forall i \in \mcal{V}}$, with ${N = 50}$ and ${n = 10}$. We examine the convergence rate of the distributed optimization algorithms over randomly-generated connected communication graphs. We update the conjugate gradient parameter $\beta$ using a modified \emph{Fletcher-Reeves Scheme} \eqref{eq:flecther_reeves_CG_parameter}.

In Table~\ref{tab:state_estimation_computation_least_squares}, we present the mean and standard deviation of the cumulative computation time per agent, in seconds, required for convergence by each distributed algorithm, over $20$ randomly-generated problems for each communication network. We utilize a closed-form solution for the primal update procedure arising in C-ADMM, making it competitive with other distributed optimization methods in terms of computation time. From Table~\ref{tab:state_estimation_computation_least_squares}, we note that \mbox{DIGing-ATC} requires the shortest computation time, closely followed by \mbox{DC-GRAD}, on densely-connected communication graphs, i.e., on graphs with ${\kappa}$ close to one, where we note that \mbox{DC-GRAD} requires an update procedure for $\beta$, increasing its computation time. However, on more sparsely-connected communication graphs, C-ADMM requires the shortest computation time.

\begin{table*}[th]
	\centering
	\caption{The mean and standard deviation of the cumulative computation time (in seconds) per agent in the distributed state estimation problem.}
	\label{tab:state_estimation_computation_least_squares}
	\begin{adjustbox}{width=\linewidth}
		{\begin{tabular}{c c c c c}
				\toprule
				Algorithm & $\kappa = 0.48$ & $\kappa = 0.80$ & $\kappa = 0.97$ & $\kappa = 1.00$ \\
				\midrule
				$AB$/Push-Pull \cite{xin2020general} & $8.95\mathrm{e}^{-4}  \pm 1.60\mathrm{e}^{-4}$ & $9.42\mathrm{e}^{-4}  \pm 1.44\mathrm{e}^{-4}$ & $1.03\mathrm{e}^{-3}  \pm 1.59\mathrm{e}^{-4}$ & $1.06\mathrm{e}^{-3}  \pm 1.43\mathrm{e}^{-4}$ \\
				$ABm$-DS \cite{xin2019distributed} & $5.54\mathrm{e}^{-4}  \pm 2.23\mathrm{e}^{-5}$ & $3.19\mathrm{e}^{-4}  \pm 3.86\mathrm{e}^{-5}$ & $2.85\mathrm{e}^{-4}  \pm 4.71\mathrm{e}^{-5}$ & $2.91\mathrm{e}^{-4}  \pm 4.10\mathrm{e}^{-5}$ \\
				C-ADMM \cite{mateos2010distributed} & $\bm{1.71\mathrm{e}^{-4}  \pm 1.87\mathrm{e}^{-5}}$ & $\bm{1.34\mathrm{e}^{-4}  \pm 1.20\mathrm{e}^{-5}}$ & $1.18\mathrm{e}^{-4}  \pm 6.84\mathrm{e}^{-6}$ & $1.17\mathrm{e}^{-4}  \pm 7.87\mathrm{e}^{-6}$ \\
				DIGing-ATC \cite{nedic2017achieving} & $5.98\mathrm{e}^{-4}  \pm 3.08\mathrm{e}^{-5}$ & $2.12\mathrm{e}^{-4}  \pm 1.61\mathrm{e}^{-5}$ & $\bm{6.79\mathrm{e}^{-5}  \pm 9.10\mathrm{e}^{-6}}$ & $\bm{3.87\mathrm{e}^{-5}  \pm 2.75\mathrm{e}^{-6}}$ \\
				DC-Grad (ours) & $7.94\mathrm{e}^{-4}  \pm 4.39\mathrm{e}^{-5}$ & $2.85\mathrm{e}^{-4}  \pm 2.04\mathrm{e}^{-5}$ & $9.10\mathrm{e}^{-5}  \pm 9.32\mathrm{e}^{-6}$ & $4.47\mathrm{e}^{-5}  \pm 2.08\mathrm{e}^{-6}$ \\
				\bottomrule
		\end{tabular}}
	\end{adjustbox}
\end{table*}

Moreover, we provide the mean and standard deviation of the cumulative size of messages exchanged per agent, in Megabytes (MB), for each distributed algorithm in Table~\ref{tab:state_estimation_communication_least_squares}. We note that C-ADMM requires agents to communicate fewer variables by a factor of 2, compared to $AB$/Push-Pull, $ABm$, \mbox{DIGing-ATC} , and \mbox{DC-GRAD}. Table~\ref{tab:state_estimation_communication_least_squares} shows that \mbox{DC-GRAD} incurs the least communication overhead for convergence on more-densely-connected graphs, closely followed by \mbox{DIGing-ATC}. This finding reveals that \mbox{DC-GRAD} requires fewer iterations for convergence on these graphs, compared to the other algorithms. On more-sparsely-connected graphs, C-ADMM incurs the least communication overhead.

\begin{table*}[th]
	\centering
	\caption{The mean and standard deviation of the cumulative size of messages exchanged by each agent (in MB) in the distributed state-estimation problem.}
	\label{tab:state_estimation_communication_least_squares}
	\begin{adjustbox}{width=\linewidth}
		{\begin{tabular}{c c c c c}
				\toprule
				Algorithm & $\kappa = 0.48$ & $\kappa = 0.80$ & $\kappa = 0.97$ & $\kappa = 1.00$ \\
				\midrule
				$AB$/Push-Pull \cite{xin2020general} & $6.06\mathrm{e}^{-2}  \pm 1.18\mathrm{e}^{-2}$ & $6.52\mathrm{e}^{-2}  \pm 1.10\mathrm{e}^{-2}$ & $6.97\mathrm{e}^{-2}  \pm 1.23\mathrm{e}^{-2}$ & $7.20\mathrm{e}^{-2}  \pm 1.13\mathrm{e}^{-2}$ \\
				$ABm$-DS \cite{xin2019distributed} & $3.64\mathrm{e}^{-2}  \pm 1.31\mathrm{e}^{-3}$ & $2.13\mathrm{e}^{-2}  \pm 2.21\mathrm{e}^{-3}$ & $1.85\mathrm{e}^{-2}  \pm 3.17\mathrm{e}^{-3}$ & $1.90\mathrm{e}^{-2}  \pm 2.78\mathrm{e}^{-3}$ \\
				C-ADMM \cite{mateos2010distributed} & $\bm{1.15\mathrm{e}^{-2}  \pm 9.80\mathrm{e}^{-4}}$ & $\bm{9.24\mathrm{e}^{-3}  \pm 8.17\mathrm{e}^{-4}}$ & $7.98\mathrm{e}^{-3}  \pm 4.78\mathrm{e}^{-4}$ & $8.03\mathrm{e}^{-3}  \pm 5.00\mathrm{e}^{-4}$ \\
				DIGing-ATC \cite{nedic2017achieving} & $4.68\mathrm{e}^{-2}  \pm 1.27\mathrm{e}^{-3}$ & $1.69\mathrm{e}^{-2}  \pm 1.11\mathrm{e}^{-3}$ & $5.16\mathrm{e}^{-3}  \pm 2.19\mathrm{e}^{-4}$ & $3.00\mathrm{e}^{-3}  \pm 2.20\mathrm{e}^{-4}$ \\
				DC-Grad (ours) & $4.63\mathrm{e}^{-2}  \pm 1.70\mathrm{e}^{-3}$ & $1.70\mathrm{e}^{-2}  \pm 1.10\mathrm{e}^{-3}$ & $\bm{5.16\mathrm{e}^{-3}  \pm 2.00\mathrm{e}^{-4}}$ & $\bm{2.58\mathrm{e}^{-3}  \pm 1.00\mathrm{e}^{-4}}$ \\
				\bottomrule
		\end{tabular}}
	\end{adjustbox}
\end{table*}

In Figure~\ref{fig:algorithm_comparison_fully_connected_least_squares}, we show the convergence error of the agents' iterates, per iteration, on a fully-connected communication network. Figure~\ref{fig:algorithm_comparison_fully_connected_least_squares} highlights that \mbox{DC-GRAD} requires the least number of iterations for convergence, closely followed by \mbox{DIGing-ATC}. In addition, $ABm$ and C-ADMM converge at relatively the same rate. Similarly, we show the convergence error of the iterates of each agent on a randomly-generated connected communication graph with ${\kappa = 0.48}$ in Figure~\ref{fig:algorithm_comparison_non_fully_connected_least_squares_0p48}. We note that C-ADMM converges the fastest in Figure~\ref{fig:algorithm_comparison_non_fully_connected_least_squares_0p48}. In addition, we note that the convergence plot of \mbox{DIGing-ATC} overlays that of \mbox{DC-GRAD}, with both algorithms exhibiting a similar performance.

\begin{figure}[th]
	\centering
	\includegraphics[width=0.85\linewidth]{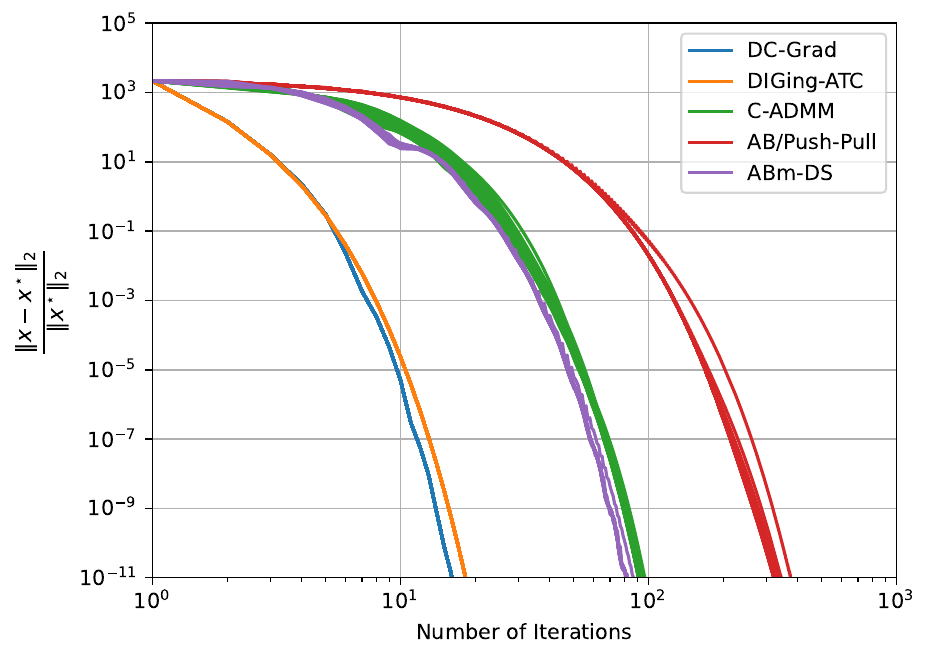}
	\caption{Convergence error of all agents per iteration in the distributed state estimation problem on a fully-connected communication graph. \mbox{DC-GRAD} converges the fastest, closely followed by \mbox{DIGing-ATC}.}
	\label{fig:algorithm_comparison_fully_connected_least_squares}
\end{figure}

\begin{figure}[th]
	\centering
	\includegraphics[width=0.85\linewidth]{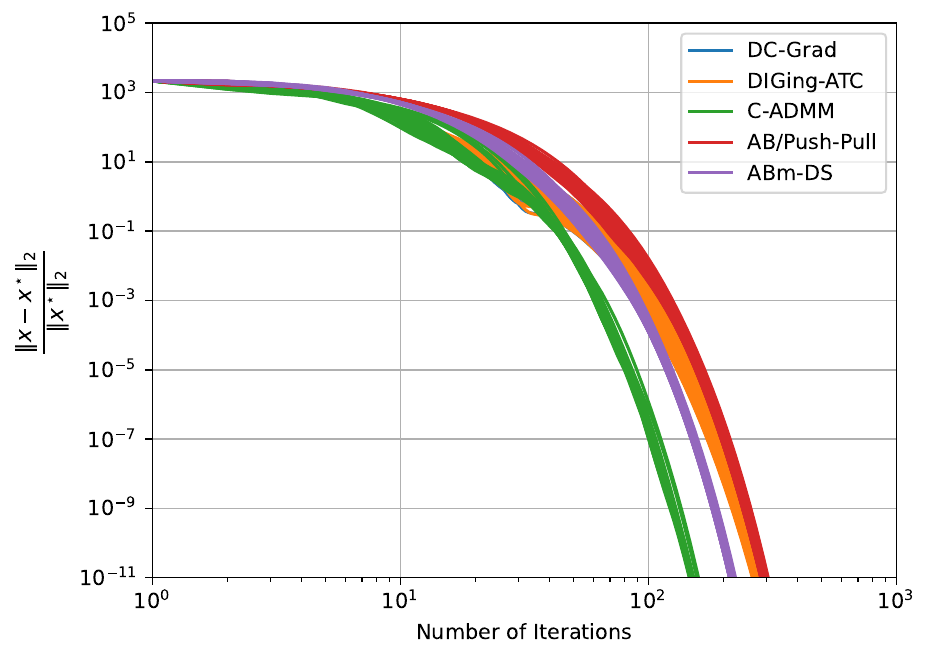}
	\caption{Convergence error of all agents per iteration in the distributed state estimation problem on a randomly-generated connected communication graph with ${\kappa = 0.48}$. C-ADMM attains the fastest convergence rate. The convergence plot of \mbox{DIGing-ATC} \emph{overlays} that of \mbox{DC-GRAD}, with both algorithms converging at the same rate.}
	\label{fig:algorithm_comparison_non_fully_connected_least_squares_0p48}
\end{figure}

\subsection{Distributed Robust Least-Squares}
We consider the robust least-squares formulation of the state estimation problem, presented in Section \ref{sec:dis_state_estimation}. We replace the $\ell_{2}^{2}$-loss function in \eqref{eq:state_estimation_least_squares} with the Huber loss function, given by:
\begin{equation}
	\label{eq:huber_loss}
	f_{\mathrm{hub}, \xi}(u) = \begin{cases}
										\frac{1}{2} u^{2}, & \text{if } \lvert u \rvert \leq \xi \ (\ell_{2}^{2}\text{-zone}), \\
										\xi (\lvert u \rvert - \frac{1}{2}\xi ), & \text{otherwise } (\ell_{1}\text{-zone}).
									\end{cases}
\end{equation}
We note the Huber loss function is less sensitive to outliers, since the penalty function $f_{\mathrm{hub}, \xi}$ grows linearly for large values of $u$. The corresponding robust least-squares optimization problem is given by:
\begin{equation}
	\label{eq:state_estimation_robust_least_squares}
	\minimize{x \in \mbb{R}^{n}} \frac{1}{N} \sum_{i = 1}^{N} f_{\mathrm{hub}, \xi}(C_{i}x - y_{i}).
\end{equation}

We assume each agent has a single observation, i.e., ${m_{i} = 1}$,~${\forall i \in \mcal{V}}$ and assess the convergence rate of the distributed algorithms on randomly-generated connected communication graphs, with ${N = 50}$ and ${n = 10}$. 
We randomly initialize $x_{i}$ such that the $x_{i}$ lies in the $\ell_{1}$-zone,~${\forall i \in \mcal{V}}$. Further, we randomly generate the problem data such that the optimal solution $x^{\star}$ lies in the $\ell_{2}^{2}$-zone. We set the maximum number of iterations to $3000$. We note that a closed-form solution does not exist for the primal update procedure for C-ADMM in this problem. Consequently, we do not include C-ADMM in this study, noting that solving the primal update procedure with iterative solvers would negatively impact the computation time of C-ADMM, effectively limiting its competitiveness. Further, we update the conjugate gradient parameter $\beta$ using a modified \emph{Polak-Ribi\`{e}re Scheme} \eqref{eq:polak_ribiere_CG_parameter}.

We provide the mean computation time per agent, in seconds, required for convergence of each algorithm, along with the standard deviation in Table~\ref{tab:state_estimation_computation_robust_least_squares}, over $20$ randomly-generated problems for each communication network. From Table~\ref{tab:state_estimation_computation_robust_least_squares}, we note that $ABm$ requires the shortest computation time for convergence on more-sparsely-connected communication graphs. However, on more-densely-connected communication graphs, \mbox{DIGing-ATC} achieves the shortest computation time, followed by \mbox{DC-GRAD}.

\begin{table*}[th]
	\centering
	\caption{The mean and standard deviation of the cumulative computation time (in seconds) per agent in the distributed robust-state-estimation problem.}
	\label{tab:state_estimation_computation_robust_least_squares}
	\begin{adjustbox}{width=\linewidth}
		{\begin{tabular}{c c c c c}
				\toprule
				Algorithm & $\kappa = 0.42$ & $\kappa = 0.74$ & $\kappa = 0.96$ & $\kappa = 1.00$ \\
				\midrule
				$AB$/Push-Pull \cite{xin2020general} & $7.55\mathrm{e}^{-3}  \pm 1.89\mathrm{e}^{-3}$ & $7.76\mathrm{e}^{-3}  \pm 1.99\mathrm{e}^{-3}$ & $8.16\mathrm{e}^{-3}  \pm 2.04\mathrm{e}^{-3}$ & $8.63\mathrm{e}^{-3}  \pm 2.02\mathrm{e}^{-3}$ \\
				$ABm$-DS \cite{xin2019distributed} & $8.96\mathrm{e}^{-4}  \pm 1.66\mathrm{e}^{-4}$ & $9.06\mathrm{e}^{-4}  \pm 2.64\mathrm{e}^{-4}$ & $9.47\mathrm{e}^{-4}  \pm 2.96\mathrm{e}^{-4}$ & $9.69\mathrm{e}^{-4}  \pm 2.75\mathrm{e}^{-4}$ \\
				DIGing-ATC \cite{nedic2017achieving} & $\bm{7.43\mathrm{e}^{-4}  \pm 6.89\mathrm{e}^{-5}}$ & $\bm{4.46\mathrm{e}^{-4}  \pm 9.93\mathrm{e}^{-5}}$ & $\bm{1.87\mathrm{e}^{-4}  \pm 4.33\mathrm{e}^{-5}}$ & $\bm{1.71\mathrm{e}^{-4}  \pm 4.51\mathrm{e}^{-5}}$ \\
				DC-Grad (ours) & $1.28\mathrm{e}^{-3}  \pm 1.15\mathrm{e}^{-4}$ & $7.65\mathrm{e}^{-4}  \pm 1.75\mathrm{e}^{-4}$ & $3.05\mathrm{e}^{-4}  \pm 7.27\mathrm{e}^{-5}$ & $2.85\mathrm{e}^{-4}  \pm 6.69\mathrm{e}^{-5}$ \\
				\bottomrule
		\end{tabular}}
	\end{adjustbox}
\end{table*}

In Table~\ref{tab:state_estimation_communication_robust_least_squares}, we show the mean and standard deviation of the cumulative size of messages exchanged by each agent (in MB), in each distributed algorithm. Generally, on more-sparsely-connected graphs, $ABm$ converges the fastest, in terms of the number of iterations, and as a result, incurs the least communication overhead, closely followed by \mbox{DIGing-ATC} and \mbox{DC-GRAD}. On the other hand, on more-densely-connected communication graphs, \mbox{DC-GRAD} incurs the least communication overhead.

\begin{table*}[th]
	\centering
	\caption{The mean and standard deviation of the cumulative size of messages exchanged by each agent (in MB) in the distributed robust-state-estimation problem.}
	\label{tab:state_estimation_communication_robust_least_squares}
	\begin{adjustbox}{width=\linewidth}
		{\begin{tabular}{c c c c c}
				\toprule
				Algorithm & $\kappa = 0.42$ & $\kappa = 0.74$ & $\kappa = 0.96$ & $\kappa = 1.00$ \\
				\midrule
				$AB$/Push-Pull \cite{xin2020general} & $8.86\mathrm{e}^{-1}  \pm 2.26\mathrm{e}^{-1}$ & $9.02\mathrm{e}^{-1}  \pm 2.30\mathrm{e}^{-1}$ & $9.80\mathrm{e}^{-1}  \pm 2.48\mathrm{e}^{-1}$ & $1.01\mathrm{e}^{0}  \pm 2.387\mathrm{e}^{-1}$ \\
				$ABm$-DS \cite{xin2019distributed} & $1.02\mathrm{e}^{-1}  \pm 1.85\mathrm{e}^{-2}$ & $1.02\mathrm{e}^{-1}  \pm 2.99\mathrm{e}^{-2}$ & $1.09\mathrm{e}^{-1}  \pm 3.45\mathrm{e}^{-2}$ & $1.08\mathrm{e}^{-1}  \pm 3.14\mathrm{e}^{-2}$ \\
				DIGing-ATC \cite{nedic2017achieving} & $\bm{1.14\mathrm{e}^{-1}  \pm 1.05\mathrm{e}^{-2}}$ & $\bm{6.65\mathrm{e}^{-2}  \pm 1.48\mathrm{e}^{-2}}$ & $2.81\mathrm{e}^{-2}  \pm 6.74\mathrm{e}^{-3}$ & $2.52\mathrm{e}^{-2}  \pm 6.70\mathrm{e}^{-3}$ \\
				DC-Grad (ours) & $1.14\mathrm{e}^{-1}  \pm 1.06\mathrm{e}^{-2}$ & $6.65\mathrm{e}^{-2}  \pm 1.55\mathrm{e}^{-2}$ & $\bm{2.71\mathrm{e}^{-2}  \pm 6.69\mathrm{e}^{-3}}$ & $\bm{2.47\mathrm{e}^{-2}  \pm 5.95\mathrm{e}^{-3}}$ \\
				\bottomrule
		\end{tabular}}
	\end{adjustbox}
\end{table*}

We show the convergence error of each agent's iterate $x_{i}$, per iteration, on a fully-connected communication network in Figure~\ref{fig:algorithm_comparison_fully_connected_robust_least_squares}. We note that \mbox{DC-GRAD} converges within the fewest number of iterations, closely followed by \mbox{DIGing-ATC}. In addition, $AB$/Push-Pull requires the greatest number of iterations for convergence. We note that $AB$/Push-Pull (which is equivalent to DIGing-CTA) utilizes the \emph{combine-then-adapt} update scheme, which results in slower convergence, generally \cite{nedic2017achieving}. Moreover, the objective function in \eqref{eq:huber_loss} is not strongly-convex over its entire domain, particularly in the $\ell_{1}$-zone. In addition, gradient-tracking methods, in general, require (\emph{restricted}) strong convexity for linear convergence. As a result, all the algorithms exhibit sublinear convergence initially, since all the algorithms are initialized with $x_{i}$ in the $\ell_{1}$-zone,~${\forall i \in \mcal{V}}$. The algorithms exhibit linear convergence when the iterates enter the $\ell_{2}^{2}$-zone, as depicted in Figure~\ref{fig:algorithm_comparison_fully_connected_robust_least_squares}. In addition, Figure~\ref{fig:algorithm_comparison_non_fully_connected_robust_least_squares_0p42} shows the convergence error of each agent's iterates on a randomly-generated communication network with ${\kappa = 0.42}$. On these graphs, $ABm$ requires the least number of iterations for convergence. We note that the convergence plot for \mbox{DIGing-ATC} overlays that of \mbox{DC-GRAD}, with both algorithms exhibiting relatively the same performance.

\begin{figure}[th]
	\centering
	\includegraphics[width=0.85\linewidth]{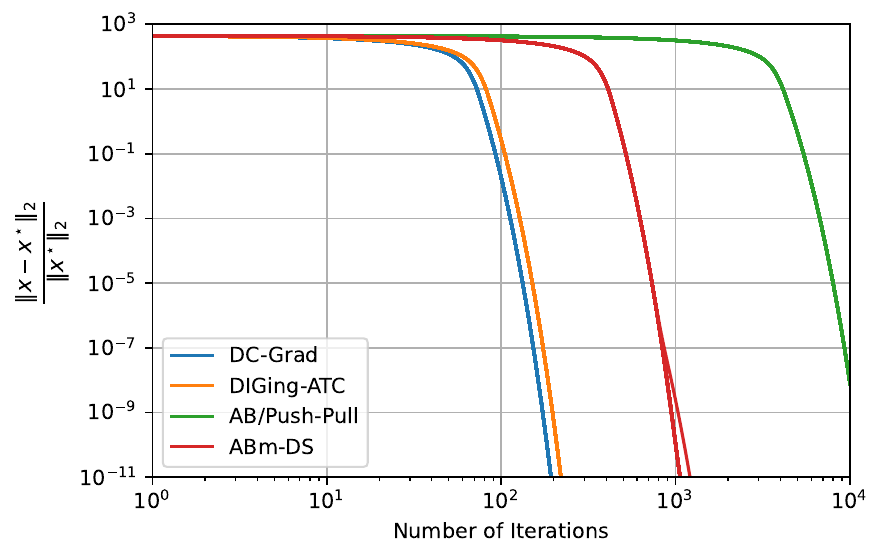}
	\caption{Convergence error of all agents per iteration in the distributed robust-state-estimation problem on a fully-connected communication network. \mbox{DC-GRAD} attains the fastest convergence rate, while $AB$/Push-Pull attains the slowest convergence rate.}
	\label{fig:algorithm_comparison_fully_connected_robust_least_squares}
\end{figure}

\begin{figure}[th]
	\centering
	\includegraphics[width=0.85\linewidth]{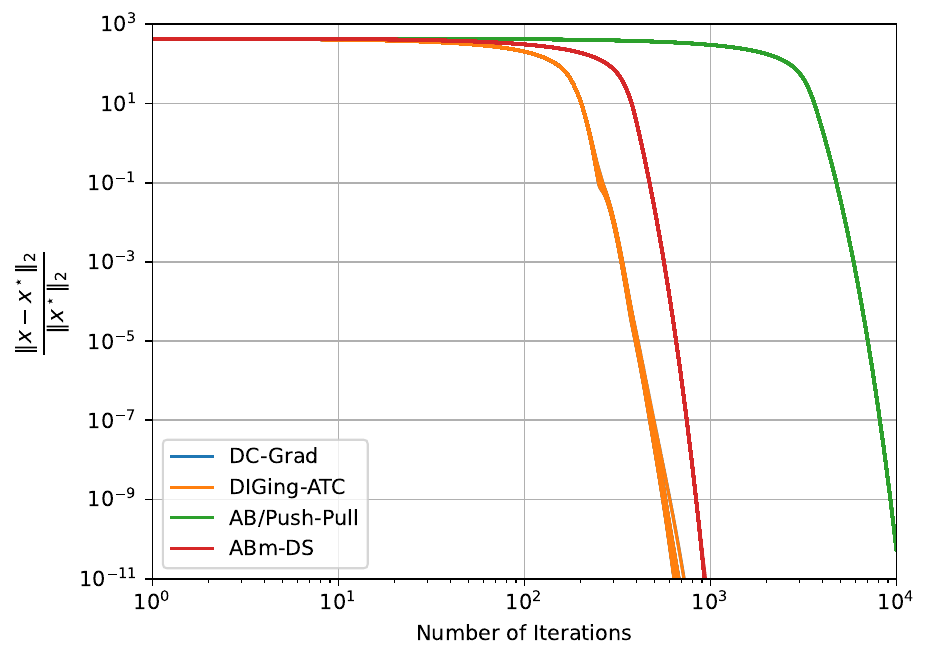}
	\caption{Convergence error of all agents per iteration in the distributed robust-state-estimation problem on a randomly-generated connected communication network with ${\kappa = 0.42}$. The convergence plot of \mbox{DIGing-ATC} \emph{overlays} that of \mbox{DC-GRAD}, with both methods converging faster than the other methods in this trial, although, in general, $ABm$ converges marginally faster on more-sparsely-connected graphs.}
	\label{fig:algorithm_comparison_non_fully_connected_robust_least_squares_0p42}
\end{figure}
	\section{Conclusion}
\label{sec:conclusion}
We introduce DC-Grad, a distributed conjugate gradient method, where each agent communicates with its immediate neighbors to compute an optimal solution of a distributed optimization problem. Our algorithm utilizes only first-order information of the optimization problem, without requiring second-order information. Through simulations, we show that our algorithm requires the least communication overhead for convergence on densely-connected communication graphs, in general, at the expense of a slightly increased computation overhead in comparison to the best-competing algorithm. In addition, on sparsely-connected communication graphs, our algorithms performs similarly to other first-order distributed algorithms. Preliminary convergence analysis of our algorithm suggests that our algorithm converges linearly. In future work, we seek to characterize the convergence rate of our method. Further, in our simulation studies, our algorithm exhibits a notably similar performance with DIGing-ATC. We intend to examine this similarity in future work.
	
\section*{Appendix}
\label{sec:appendix}

\let\oldsubsection = \thesubsection
\renewcommand{\thesubsection}{\Alph{subsection}}

\subsection{Proof of Lemma \ref{lem:sequence_bounds}}
\label{appdx:lemma_sequence_bounds}
Before proceeding with the proof, we state the following relation:
\begin{align}
	\norm{\mean{\bm{\alpha}^{(k)} \cdot \bm{z}^{(k)}}}_{2} &= \norm{\mean{\bm{\alpha}}^{(k)} \cdot \mean{\bm{z}}^{(k)} + \mean{\tilde{\bm{\alpha}}^{(k)} \cdot \tilde{\bm{z}}^{(k)}}}_{2}, \\
	& \geq \norm{\mean{\bm{\alpha}}^{(k)}}_{2} \cdot \norm{\mean{\bm{z}}^{(k)}}_{2} - \norm{\tilde{\bm{\alpha}}^{(k)}}_{2} \cdot \norm{\tilde{\bm{z}}^{(k)}}_{2}, \label{eq:x_update_mean_z_relation}
\end{align}
where we have used the fact that ${\mean{\alpha}^{(k)}  = \norm{\mean{\bm{\alpha}}^{(k)}}_{2}}$ and ${\norm{\bm{1}_{N} \bm{1}_{N}^{\T}}_{2} = \sqrt{N} \sqrt{N} = N}$.

Considering the recurrence in \eqref{eq:mean_z_sequence}:
\begin{equation}
	\label{eq:z_diff_relation}
	\begin{aligned}
		\bm{z}^{(k+1)} - \mean{\bm{z}}^{(k+1)} &= W \bm{z}^{(k)} - \mean{\bm{z}}^{(k)} + W(\bm{s}^{(k+1)} - \bm{s}^{(k)}) \\
		& \quad - (\mean{\bm{s}}^{(k+1)} -  \mean{\bm{s}}^{(k)}), \\
		\tilde{\bm{z}}^{(k+1)} &= M \tilde{\bm{z}}^{(k)} + M (\bm{s}^{(k+1)} - \bm{s}^{(k)}), 
	\end{aligned}
\end{equation}
where we have utilized the relation: ${M \mean{\bm{z}}^{(k)} = 0}$.
From \eqref{eq:z_diff_relation}:
\begin{equation}
	\begin{aligned}
		\norm{\tilde{\bm{z}}^{(k+1)}}_{2} &= \norm{M \tilde{\bm{z}}^{(k)} + M (\bm{s}^{(k+1)} - \bm{s}^{(k)})}_{2}, \\
		&\leq \norm{M \tilde{\bm{z}}^{(k)} }_{2} + \norm{M (\bm{s}^{(k+1)} - \bm{s}^{(k)})}_{2}, \\
		&\leq \lambda \norm{\tilde{\bm{z}}^{(k)}}_{2} + \lambda \norm{\bm{s}^{(k+1)} - \bm{s}^{(k)}}_{2}.
	\end{aligned}
\end{equation}
Using the recurrence \eqref{eq:s_sequence}:
\begin{equation}
	\begin{aligned}
		\norm{\tilde{\bm{z}}^{(k+1)}}_{2} 
		&\leq \lambda \norm{\tilde{\bm{z}}^{(k)}}_{2} +  \lambda \norm{\bm{g}^{(k+1)} - \bm{g}^{(k)}}_{2} \\
		& \quad + \lambda \left(\norm{\bm{\beta}^{(k)} \cdot \bm{s}^{(k)}}_{2} + \norm{\bm{\beta}^{(k - 1)} \cdot \bm{s}^{(k - 1)}}_{2} \right), \\
		&\leq \lambda \norm{\tilde{\bm{z}}^{(k)}}_{2} +  \lambda L  \norm{\bm{x}^{(k+1)} - \bm{x}^{(k)}}_{2} \\
		& \quad + \lambda\left( \norm{\bm{\beta}^{(k)} \cdot \bm{s}^{(k)}}_{2} + \norm{\bm{\beta}^{(k - 1)} \cdot \bm{s}^{(k - 1)}}_{2} \right), 		
	\end{aligned}
\end{equation}
from Lipschitz continuity of $\nabla \bm{f}$, with:
\begin{equation}
	\label{eq:tilde_z_norm_bound_step_a}
	\begin{aligned}
		\norm{\tilde{\bm{z}}^{(k+1)}}_{2} &\leq \lambda \norm{\tilde{\bm{z}}^{(k)}}_{2} \\
		& \quad +  \lambda L  \norm{\tilde{\bm{x}}^{(k+1)} + \mean{\bm{x}}^{(k+1)} - \tilde{\bm{x}}^{(k)} - \mean{\bm{x}}^{(k)} }_{2} \\
		& \quad + \lambda\left( \norm{\bm{\beta}^{(k)} \cdot \bm{s}^{(k)}}_{2} + \norm{\bm{\beta}^{(k - 1)} \cdot \bm{s}^{(k - 1)}}_{2} \right), \\	
		&\leq \lambda \norm{\tilde{\bm{z}}^{(k)}}_{2} \\
		& \quad +  \lambda L \left( \norm{\tilde{\bm{x}}^{(k+1)}}_{2} + \norm{\tilde{\bm{x}}^{(k)}}_{2} \right. \\
        & \hspace{4em} \left. + \norm{\mean{\bm{\alpha}^{(k)} \cdot \bm{z}^{(k)}}}_{2} \right) \\
		& \quad + \lambda\left( \norm{\bm{\beta}^{(k)} \cdot \bm{s}^{(k)}}_{2} + \norm{\bm{\beta}^{(k - 1)} \cdot \bm{s}^{(k - 1)}}_{2} \right), 
	\end{aligned}
\end{equation}
using \eqref{eq:mean_x_sequence}.

In addition: 
\begin{equation}
	\begin{aligned}
		\norm{\tilde{\bm{x}}^{(k+1)}}_{2} &= \norm{\bm{x}^{(k)} - \mean{\bm{x}}^{(k)}}, \\
		&= \norm{W \left(\bm{x}^{(k)} + \bm{\alpha}^{(k)} \cdot \bm{z}^{(k)} \right) - \mean{\bm{x}}^{(k)} - \mean{\bm{\alpha}^{(k)} \cdot \bm{z}^{(k)}}}_{2}, \\
		&= \norm{M \tilde{\bm{x}}^{(k)} + M \left(\bm{\alpha}^{(k)} \cdot \bm{z}^{(k)} - \mean{\bm{\alpha}^{(k)} \cdot \bm{z}^{(k)}} \right)}_{2}, \\
		&= \norm{M \tilde{\bm{x}}^{(k)} + M \left(\bm{\alpha}^{(k)} \cdot \tilde{\bm{z}}^{(k)} + \tilde{\bm{\alpha}}^{(k)} \cdot \mean{\bm{z}}^{(k)} \right)}_{2},
	\end{aligned}
\end{equation}
noting: ${M \mean{\bm{v}} = 0}$. Thus:
\begin{equation}
	\label{eq:tilde_x_norm_bound}
	\begin{aligned}
		\norm{\tilde{\bm{x}}^{(k+1)}}_{2} &\leq \lambda \norm{\tilde{\bm{x}}^{(k)}}_{2} + \lambda \norm{\bm{\alpha}^{(k)} \cdot \tilde{\bm{z}}^{(k)}}_{2} \\
        & \quad + \lambda \norm{\tilde{\bm{\alpha}}^{(k)} \cdot \mean{\bm{z}}^{(k)}}_{2}, \\
		&\leq  \lambda \norm{\tilde{\bm{x}}^{(k)}}_{2} + \lambda \alpha_{\max} \norm{\tilde{\bm{z}}^{(k)}}_{2} \\
		& \quad + \lambda \frac{ \norm{\tilde{\bm{\alpha}}^{(k)}}_{2}}{\norm{\mean{\bm{\alpha}}^{(k)}}_{2}} \cdot \left(\norm{\mean{\bm{\alpha}^{(k)} \cdot \bm{z}^{(k)}}}_{2} \right. \\
		& \hspace{8em} \left. + \norm{\tilde{\bm{\alpha}}^{(k)}}_{2} \cdot \norm{\tilde{\bm{z}}^{(k)}}_{2} \right), \\
		& \leq \lambda \norm{\tilde{\bm{x}}^{(k)}}_{2} + \lambda  \alpha_{\max} (1 +r_{\alpha}) \norm{\tilde{\bm{z}}^{(k)}}_{2} \\
		& \quad + \lambda r_{\alpha} \norm{\mean{\bm{\alpha}^{(k)} \cdot \bm{z}^{(k)}}}_{2}, 
	\end{aligned}
\end{equation}
using \eqref{eq:x_update_mean_z_relation} in the second inequality and the fact ${\norm{\tilde{\bm{\alpha}}^{(k)}}_{2} \leq\alpha_{\max} }$.

Likewise, from \eqref{eq:s_sequence} and \eqref{eq:mean_s_sequence}:
\begin{equation}
	\label{eq:tilde_s_sequence}
	\begin{aligned}
		\tilde{\bm{s}}^{(k+1)} = -\tilde{\bm{g}}^{(k + 1)} + \bm{\beta}^{(k)} \cdot \bm{s}^{(k)} - \mean{\bm{\beta}^{(k)} \cdot \bm{s}^{(k)}}_{2}.
	\end{aligned}
\end{equation}
Considering the second term in \eqref{eq:tilde_s_sequence}:
\begin{equation}
	\begin{aligned}
		\norm{\bm{\beta}^{(k)} \cdot \bm{s}^{(k)}}_{2} &= \norm{\bm{\beta}^{(k)} \cdot \left(\tilde{\bm{s}}^{(k)} + \mean{\bm{s}}^{(k)}\right)}_{2}, \\
		&\leq \beta_{\max} \norm{\tilde{\bm{s}}^{(k)} + \mean{\bm{s}}^{(k)}}_{2}, \\
		&\leq \beta_{\max} \left( \norm{\tilde{\bm{s}}^{(k)}}_{2} + \norm{\mean{\bm{s}}^{(k)}}_{2} \right).
	\end{aligned}
\end{equation}
Further, considering the third term in \eqref{eq:tilde_s_sequence}:
\begin{equation}
	\begin{aligned}
		\norm{\mean{\bm{\beta}^{(k)} \cdot \bm{s}^{(k)}}}_{2} & \geq \norm{\mean{\bm{\beta}}^{(k)}}_{2} \cdot \norm{\mean{\bm{s}}^{(k)}}_{2} - \norm{\tilde{\bm{\beta}}^{(k)}}_{2} \cdot \norm{\tilde{\bm{s}}^{(k)}}_{2};
	\end{aligned}
\end{equation}
hence:
\begin{equation}
	\label{eq:norm_mean_s_upper_bound}
	\norm{\mean{\bm{s}}^{(k)}}_{2} \leq \frac{1}{\norm{\mean{\bm{\beta}}^{(k)}}_{2}} \left( \norm{\mean{\bm{\beta}^{(k)} \cdot \bm{s}^{(k)}}}_{2} + \norm{\tilde{\bm{\beta}}^{(k)}}_{2} \cdot \norm{\tilde{\bm{s}}^{(k)}}_{2} \right),
\end{equation}
which yields:
\begin{equation}
	\begin{aligned}
		\norm{\bm{\beta}^{(k)} \cdot \bm{s}^{(k)}}_{2} &\leq \beta_{\max} (1 + r_{\beta}) \norm{\tilde{\bm{s}}^{(k)}}_{2} \\
	     & \quad + \frac{\beta_{\max}}{\norm{\mean{\bm{\beta}}^{(k)}}_{2}} \norm{\mean{\bm{\beta}^{(k)} \cdot \bm{s}^{(k)}}}_{2}.
	\end{aligned}
\end{equation}
Hence, from \eqref{eq:tilde_s_sequence}:
\begin{equation}
	\label{eq:tilde_s_norm_bound}
	\begin{aligned}
		\norm{\tilde{\bm{s}}^{(k+1)}}_{2} &\leq \norm{\tilde{\bm{g}}^{(k + 1)}}_{2} + \beta_{\max} (1 + r_{\beta}) \norm{\tilde{\bm{s}}^{(k)}}_{2} \\
		& \quad + \left( 1 + \frac{\beta_{\max}}{\norm{\mean{\bm{\beta}}^{(k)}}_{2}}\right) \norm{\mean{\bm{\beta}^{(k)} \cdot \bm{s}^{(k)}}}_{2}, \\
		&\leq \norm{\tilde{\bm{g}}^{(k + 1)}}_{2} + \beta_{\max} (1 + r_{\beta}) \norm{\tilde{\bm{s}}^{(k)}}_{2} \\
		& \quad + \left( 1 + r_{\beta} \right) \norm{\mean{\bm{\beta}^{(k)} \cdot \bm{s}^{(k)}}}_{2}.
	\end{aligned}
\end{equation}
In addition, from \eqref{eq:tilde_z_norm_bound_step_a} and \eqref{eq:tilde_x_norm_bound}:
\begin{equation}
	\label{eq:tilde_z_norm_bound}
	\begin{aligned}
		\norm{\tilde{\bm{z}}^{(k+1)}}_{2} 
		&\leq (\lambda + \lambda^{2} L \alpha_{\max} ( 1 + r_{\alpha})) \norm{\tilde{\bm{z}}^{(k)}}_{2} \\
		& \quad +  \lambda L ( \lambda  + 1) \norm{\tilde{\bm{x}}^{(k)}}_{2} \\
		& \quad + \lambda  L (\lambda r_{\alpha} + 1)  \norm{\mean{\bm{\alpha}^{(k)} \cdot \bm{z}^{(k)}}}_{2}  \\
		& \quad + \lambda\left( \norm{\bm{\beta}^{(k)} \cdot \bm{s}^{(k)}}_{2} + \norm{\bm{\beta}^{(k - 1)} \cdot \bm{s}^{(k - 1)}}_{2} \right).
	\end{aligned}
\end{equation}

\subsection{Proof of Theorem \ref{thm:agreement}}
\label{appdx:thm_agreement}
We introduce the following sequences:
\begin{align}
	X^{(k)} &= \sqrt{\sum_{l = 0}^{k} \norm{\tilde{\bm{x}}^{(l)}}_{2}^{2}}, \quad
	S^{(k)} = \sqrt{\sum_{l = 0}^{k} \norm{\tilde{\bm{s}}^{(l)}}_{2}^{2}}, \\
	Z^{(k)} &= \sqrt{\sum_{l = 0}^{k} \norm{\tilde{\bm{z}}^{(l)}}_{2}^{2}}, \\
	R^{(k)} &= \sqrt{\sum_{l = 0}^{k} \left(\norm{\mean{\bm{\alpha}^{(l)} \cdot \bm{z}^{(l)}}}_{2}^{2}
	+ \norm{\mean{\bm{\beta}^{(l)} \cdot \bm{s}^{(l)}}}_{2}^{2} 
	+ \norm{\tilde{\bm{g}}^{(l + 1)}}_{2}^{2} \right)
	}.
\end{align}

We state the following lemma, and refer readers to \cite{xu2015augmented} for its proof.

\begin{lemma}
	Given the non-negative scalar sequence $\{\nu^{(k)} \}_{\forall k > 0}$, defined by:
	\begin{equation}
		\nu^{(k + 1)} \leq \lambda \nu^{(k)} + \omega^{(k)},
	\end{equation}
	where ${\lambda \in (0, 1)}$, the following relation holds:
	\begin{equation}
	V^{(k + 1)} \leq \gamma \Omega^{(k)} + \epsilon,
	\end{equation}
	where ${V^{(k)} = \sqrt{\sum_{l = 0}^{k} \norm{\nu^{(l)}}_{2}^{2}}}$, ${\Omega^{(k)} = \sqrt{\sum_{l = 0}^{k} \norm{\omega^{(l)}}_{2}^{2}}}$, ${\gamma = \frac{\sqrt{2}}{1 - \lambda}}$, and ${\epsilon =  \nu^{(0)} \sqrt{\frac{2}{1 - \lambda^{2}}}}$.
\end{lemma}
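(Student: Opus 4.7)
The plan is to reduce the $\ell^{2}$-type bound $V^{(k+1)} \leq \gamma \Omega^{(k)} + \epsilon$ to a pointwise inequality on $\nu^{(l)}$ and then aggregate. First I would iterate the recursion $\nu^{(k+1)} \leq \lambda \nu^{(k)} + \omega^{(k)}$ to obtain the closed-form estimate
\begin{equation*}
\nu^{(l)} \leq \lambda^{l} \nu^{(0)} + \sum_{j=0}^{l-1} \lambda^{l-1-j} \omega^{(j)}, \quad l \geq 0,
\end{equation*}
where the empty sum at $l=0$ is interpreted as zero. This decomposes each iterate into a geometrically decaying transient driven by the initial condition and a geometrically weighted convolution of the forcing sequence $\omega$.

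Next, I would square both sides using $(a+b)^{2} \leq 2a^{2} + 2b^{2}$ to decouple the two contributions. The convolution term I would tame with a weighted Cauchy--Schwarz inequality, splitting $\lambda^{l-1-j} = \lambda^{(l-1-j)/2}\cdot\lambda^{(l-1-j)/2}$ to obtain
\begin{equation*}
\left(\sum_{j=0}^{l-1} \lambda^{l-1-j} \omega^{(j)}\right)^{\!\!2} \leq \frac{1}{1-\lambda}\sum_{j=0}^{l-1} \lambda^{l-1-j} (\omega^{(j)})^{2},
\end{equation*}
where I have used the geometric-series bound $\sum_{j=0}^{l-1}\lambda^{l-1-j} \leq \tfrac{1}{1-\lambda}$.

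Then I would sum the squared pointwise bound over $l = 0, \ldots, k+1$ and interchange the order of summation in the resulting double sum. The transient contributes at most $\tfrac{2}{1-\lambda^{2}}(\nu^{(0)})^{2}$ after summing the geometric series $\sum_{l\geq 0}\lambda^{2l}$, while the convolution contribution, after the swap, becomes $\tfrac{2}{1-\lambda}\sum_{j=0}^{k}(\omega^{(j)})^{2}\sum_{l=j+1}^{k+1}\lambda^{l-1-j} \leq \tfrac{2}{(1-\lambda)^{2}}(\Omega^{(k)})^{2}$. Combining these yields
\begin{equation*}
(V^{(k+1)})^{2} \leq \frac{2}{1-\lambda^{2}}(\nu^{(0)})^{2} + \frac{2}{(1-\lambda)^{2}}(\Omega^{(k)})^{2},
\end{equation*}
and sub-additivity of the square root $\sqrt{a+b}\leq\sqrt{a}+\sqrt{b}$ produces the desired conclusion $V^{(k+1)}\leq \epsilon + \gamma\Omega^{(k)}$ with the stated constants $\epsilon = \nu^{(0)}\sqrt{2/(1-\lambda^{2})}$ and $\gamma = \sqrt{2}/(1-\lambda)$.

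I do not anticipate a substantive obstacle here: the argument is essentially a discrete $\ell^{2}$ version of Young's convolution inequality for a geometric kernel. The only mildly subtle step is using the weighted Cauchy--Schwarz bound on the inner sum rather than a naive $\ell^{\infty}$ estimate on $\omega$, so that the $\ell^{2}$ norm of $\omega$ (and not its supremum) surfaces on the right-hand side with the clean factor $(1-\lambda)^{-2}$, exactly as required.
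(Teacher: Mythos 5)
Your proof is correct. The paper does not prove this lemma itself --- it defers to the cited reference (Xu et al.) --- and your argument (unroll the recursion, split off the geometric transient, apply weighted Cauchy--Schwarz to the convolution term, sum, swap the order of summation, and finish with $\sqrt{a+b}\leq\sqrt{a}+\sqrt{b}$) is exactly the standard derivation used there, recovering the stated constants $\gamma = \sqrt{2}/(1-\lambda)$ and $\epsilon = \nu^{(0)}\sqrt{2/(1-\lambda^{2})}$.
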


From \eqref{eq:tilde_x_norm_bound}, let:
\begin{equation}
	\omega^{(k)} = \lambda  \alpha_{\max} (1 +r_{\alpha}) \norm{\tilde{\bm{z}}^{(k)}}_{2}
	 + \lambda r_{\alpha} \norm{\mean{\bm{\alpha}^{(k)} \cdot \bm{z}^{(k)}}}_{2},
\end{equation}
which yields:
\begin{equation}
	\label{eq:X_recurrence_step_a}
	\begin{aligned}
		X^{(k)} 
		& \leq \rho_{xz} Z^{(k)} + \rho_{xr} R^{(k)} + \epsilon_{x},
	\end{aligned}
\end{equation}
where ${\rho_{xz} = \frac{\sqrt{2}}{1 - \lambda} \lambda \alpha_{\max} (1 +r_{\alpha})}$, ${\rho_{xr} = \frac{\sqrt{2}}{1 - \lambda} \lambda r_{\alpha}}$, and ${\epsilon_{x} = \norm{\tilde{\bm{x}}^{(0)}}_{2} \sqrt{\frac{2}{1 - \lambda^{2}}}}$.
Likewise, from \eqref{eq:tilde_s_norm_bound}, assuming ${\lambda_{s} = \beta_{\max} (1 + r_{\beta}) < 1}$:
\begin{equation}
	\label{eq:S_recurrence_step}
	\begin{aligned}
		S^{(k)} 
		& \leq \mu_{sr} R^{(k)} + \mu_{sc},
	\end{aligned}
\end{equation}
where ${\mu_{sr} = \rho_{sr} = \frac{\sqrt{2}}{1 - \lambda_{s}} (2 +r_{\beta})}$ and ${\mu_{c} = \epsilon_{s} = \norm{\tilde{\bm{s}}^{(0)}}_{2} \sqrt{\frac{2}{1 - \lambda_{s}^{2}}}}$;
from \eqref{eq:tilde_z_norm_bound}, assuming ${\lambda_{z} = \lambda + \lambda^{2} L \alpha_{\max} ( 1 + r_{\alpha}) < 1}$, 
\begin{equation}
	\label{eq:Z_recurrence_step_a}
	\begin{aligned}
		Z^{(k)} 
		& \leq \rho_{zx} X^{(k)} +  \rho_{zr} R^{(k)} + \epsilon_{z},
	\end{aligned}
\end{equation}
where ${\rho_{zx} = \frac{\sqrt{2}}{1 - \lambda_{z}} \lambda L ( \lambda  + 1)}$ , ${\rho_{zr} = \frac{\sqrt{2}}{1 - \lambda_{z}} \left(\lambda  L (\lambda r_{\alpha} + 1) + 2 \lambda \right)}$ and ${\epsilon_{z} = \norm{\tilde{\bm{z}}^{(0)}}_{2} \sqrt{\frac{2}{1 - \lambda_{z}^{2}}}}$.

From \eqref{eq:X_recurrence_step_a} and \eqref{eq:Z_recurrence_step_a}:
\begin{equation}
	\label{eq:X_recurrence_step}
	\begin{aligned}
		X^{(k)}	
		& \leq \mu_{xr} R^{(k)} + \mu_{xc}, \\
	\end{aligned}
\end{equation}
where ${\mu_{xr} = \frac{\rho_{xz}  \rho_{zr} + \rho_{xr}}{1 - \rho_{xz} \rho_{zx}}}$ and ${\mu_{xc} = \frac{\rho_{xz}  \epsilon_{z} + \epsilon_{x}}{1 - \rho_{xz} \rho_{zx}}}$;
likewise, 
\begin{equation}
	\label{eq:Z_recurrence_step}
	\begin{aligned}
		Z^{(k)}	
		& \leq \mu_{zr} R^{(k)} + \mu_{zc}, \\
	\end{aligned}
\end{equation}
where ${\mu_{zr} = \frac{\rho_{zx} \rho_{xr} +  \rho_{zr}}{\rho_{zx} \rho_{xz}}}$ and ${\mu_{zc} = \frac{\rho_{zx} \epsilon_{x} + \epsilon_{z}}{\rho_{zx} \rho_{xz}}}$.

From $L$-Lipschitz continuity of the gradient:
\begin{equation}
	f_{i}(y) \leq f_{i}(x) + g(x)^{\T}(y - x) + \frac{L_{i}}{2} \norm{y - x}_{2}^{2}, \ \forall i \in \mcal{V}.
\end{equation}

Hence:
\begin{equation}
	\label{eq:L_Lipschitz_upper_bound_step_a}
	\begin{aligned}
		\bm{f}(\mean{\bm{x}}^{(k+1)}) &\leq \bm{f}(\mean{\bm{x}}^{(k)}) \\
            & \quad + \frac{1}{N} \trace{\bm{g}(\mean{\bm{x}}^{(k)}) \cdot (\mean{\bm{x}}^{(k+1)} - \mean{\bm{x}}^{(k)})^{\T} } \\
		& \quad + \frac{1}{N} \cdot \frac{L}{2} \norm{\mean{\bm{x}}^{(k+1)} - \mean{\bm{x}}^{(k)}}_{F}^{2}, \\
		&\leq  \bm{f}(\mean{\bm{x}}^{(k)}) + \frac{1}{N} \norm{\mean{\bm{g}}^{(k)}}_{F} \norm{\mean{\bm{\alpha}^{(k)} \cdot \bm{z}^{(k)}}}_{F} \\
		& \quad + \frac{1}{N} \cdot \frac{L}{2} \norm{\mean{\bm{\alpha}^{(k)} \cdot \bm{z}^{(k)}}}_{F}^{2} \\
		& \quad	+ \frac{1}{N} \norm{\bm{g}(\mean{\bm{x}}^{(k)}) -\bm{g}(\bm{x}^{(k)})}_{F} \norm{\mean{\bm{\alpha}^{(k)} \cdot \bm{z}^{(k)}}}_{F}.
	\end{aligned}
\end{equation}

Considering the second term in \eqref{eq:L_Lipschitz_upper_bound_step_a}:
\begin{equation}
	\label{eq:mean_g_relation_step_a}
	\begin{aligned}
		\norm{\mean{\bm{g}}^{(k)}}_{F} &= \norm{\mean{\bm{\beta}^{(k-1)} \cdot \bm{s}^{(k-1)}} - \mean{\bm{s}}^{(k)}}_{F}, \\
		&\leq \norm{\mean{\bm{\beta}^{(k-1)} \cdot \bm{s}^{(k-1)}}}_{F} + \norm{\mean{\bm{s}}^{(k)}}_{F},
	\end{aligned}
\end{equation}
from \eqref{eq:mean_s_sequence}. 
Further:
\begin{equation}
	\begin{aligned}
		\mean{\bm{\beta}^{(k)} \cdot \bm{s}^{(k)}} &= \mean{\bm{\beta}}^{(k)} \cdot \mean{\bm{s}}^{(k)} + \mean{\tilde{\bm{\beta}}^{(k)} \cdot \tilde{\bm{s}}^{(k)}}, \\
	\end{aligned}
\end{equation}
which shows that:
\begin{equation}
	\label{eq:mean_s_relation}
	\begin{aligned}
		\mean{\bm{s}}^{(k)} = \mean{\bm{\beta}}^{(k)^{\dagger}} \left( \mean{\bm{\beta}^{(k)} \cdot \bm{s}^{(k)}} - \mean{\tilde{\bm{\beta}}^{(k)} \cdot \tilde{\bm{s}}^{(k)}} \right), \\
	\end{aligned}
\end{equation}
where ${\mean{\bm{\beta}}^{(k)^{\dagger}}}$ denotes the inverse of ${\mean{\bm{\beta}}^{(k)}}$.
Hence, from \eqref{eq:mean_g_relation_step_a} and \eqref{eq:mean_s_relation}:
\begin{equation}
	\begin{aligned}
		\norm{\mean{\bm{g}}^{(k)}}_{F} &\leq \norm{\mean{\bm{\beta}^{(k-1)} \cdot \bm{s}^{(k-1)}}}_{F} \\
		& \quad + \frac{1}{\norm{\mean{\bm{\beta}}^{(k)}}_{F}} \left( \norm{\mean{\bm{\beta}^{(k)} \cdot \bm{s}^{(k)}}}_{F} + \norm{\mean{\tilde{\bm{\beta}}^{(k)} \cdot \tilde{\bm{s}}^{(k)}}}_{F} \right), \\
	\end{aligned}
\end{equation}
where ${\norm{\mean{\bm{\beta}}^{(k)^{\dagger}}}_{F} = \frac{\sqrt{N}}{\norm{\mean{\beta}^{(k)}}}_{2}}$.
Further:
\begin{equation}
	\begin{aligned}
		\norm{\mean{\tilde{\bm{\beta}}^{(k)} \cdot \tilde{\bm{s}}^{(k)}}}_{F} &= \norm{\frac{\bm{1}_{N}\bm{1}_{N}^{\T}}{N} \left(\tilde{\bm{\beta}}^{(k)} \cdot \tilde{\bm{s}}^{(k)}\right)}_{F}, \\
		& \leq \norm{\tilde{\bm{\beta}}^{(k)}}_{F} \norm{\tilde{\bm{s}}^{(k)}}_{F}.
	\end{aligned}
\end{equation}
In addition, we note that:
\begin{equation}
	\begin{aligned}
		\norm{\mean{\bm{\beta}^{(k-1)} \cdot \bm{s}^{(k-1)}}}_{F} \norm{\mean{\bm{\alpha}^{(k)} \cdot \bm{z}^{(k)}}}_{F} &\leq \frac{1}{2} \norm{\mean{\bm{\beta}^{(k-1)} \cdot \bm{s}^{(k-1)}}}_{F}^{2} \\
		& \quad + \frac{1}{2} \norm{\mean{\bm{\alpha}^{(k)} \cdot \bm{z}^{(k)}}}_{F}^{2}, 
	\end{aligned}
\end{equation}
from the relation: ${a \cdot b \leq \frac{1}{2} (a^{2} + b^{2})}$.

Hence, from \eqref{eq:L_Lipschitz_upper_bound_step_a}:
\begin{equation}
	\label{eq:L_Lipschitz_upper_bound_step_b}
	\begin{aligned}
		\bm{f}(\mean{\bm{x}}^{(k+1)}) 
		&\leq \bm{f}(\mean{\bm{x}}^{(k)}) \\
		& \quad + \frac{1}{2} \left(\norm{\mean{\bm{\beta}^{(k-1)} \cdot \bm{s}^{(k-1)}}}_{2}^{2}
		+ \norm{\mean{\bm{\alpha}^{(k)} \cdot \bm{z}^{(k)}}}_{2}^{2} \right) \\
		& \quad + \frac{1}{2 \norm{\mean{\bm{\beta}}^{(k)}}_{2}} \left(\norm{\mean{\bm{\beta}^{(k)} \cdot \bm{s}^{(k)}}}_{2}^{2}
		+ \norm{\mean{\bm{\alpha}^{(k)} \cdot \bm{z}^{(k)}}}_{2}^{2} \right) \\
		& \quad + \frac{\norm{\tilde{\bm{\beta}}^{(k)}}_{2}}{\norm{\mean{\bm{\beta}}^{(k)}}_{2}} \norm{\tilde{\bm{s}}^{(k)}}_{2}	\norm{\mean{\bm{\alpha}^{(k)} \cdot \bm{z}^{(k)}}}_{2} \\
		& \quad + \frac{L}{2} \norm{\mean{\bm{\alpha}^{(k)} \cdot \bm{z}^{(k)}}}_{2}^{2} \\
		& \quad	+ L \norm{\tilde{\bm{x}}^{(k)}}_{2} \norm{\mean{\bm{\alpha}^{(k)} \cdot \bm{z}^{(k)}}}_{2},
	\end{aligned}
\end{equation}
where the last term results from Lipschitz continuity of $\nabla f$.
Summing \eqref{eq:L_Lipschitz_upper_bound_step_b} over $k$ from $0$ to $t$:
\begin{equation}
	\begin{aligned}
		\bm{f}(\mean{\bm{x}}^{(t+1)}) 
		&\leq \bm{f}(\mean{\bm{x}}^{(0)}) + \frac{1}{2} \left(1 + \frac{1}{\sqrt{N} \beta_{\max}} +  L \right) \left(R^{(t)}\right)^{2} \\
		& \quad +   r_{\beta} S^{(t)} R^{(t)} +  L X^{(t)} R^{(t)}
	\end{aligned}
\end{equation}
where ${\bm{\beta}^{(-1)} = \bm{s}^{(-1)} = \bm{0}}$, and we have added the term ${\frac{1}{2} \norm{\mean{\bm{\beta}^{(t)} \cdot \bm{s}^{(t)}}}_{2}^{2}}$.

Given \eqref{eq:S_recurrence_step} and \eqref{eq:X_recurrence_step}:
\begin{equation}
	\label{eq:objective_bound}
	\begin{aligned}
		\bm{f}(\mean{\bm{x}}^{(t+1)}) 
		&\leq \bm{f}(\mean{\bm{x}}^{(0)}) + a_{1} \left(R^{(t)}\right)^{2} + a_{2} R^{(t)},
	\end{aligned}
\end{equation}
where ${a_{1} =  \frac{1}{2} \left(1 + \frac{1}{\sqrt{N} \beta_{\max}} + L \right) +  r_{\beta} \mu_{sr} +  L \mu_{xr}}$ and ${a_{2} = r_{\beta} \mu_{sc} + L \mu_{xc}}$.

Subtracting ${f^{\star} = f(x^{\star})}$ from both sides in \eqref{eq:objective_bound} yields:
\begin{equation}
	\begin{aligned}
		\bm{f}(\mean{\bm{x}}^{(t+1)}) - f^{\star} &\leq \bm{f}(\mean{\bm{x}}^{(0)}) - f^{\star}  + a_{1} \left(R^{(t)}\right)^{2} + a_{2} R^{(t)},
	\end{aligned}
\end{equation}
showing that:
\begin{equation}
	0 \leq \bm{f}(\mean{\bm{x}}^{(t+1)}) - f^{\star}  \leq \bm{f}(\mean{\bm{x}}^{(0)}) - f^{\star}  + a_{1} \left(R^{(t)}\right)^{2} + a_{2} R^{(t)}.
\end{equation}
Hence:
\begin{equation}
	a_{1} \left(R^{(t)}\right)^{2} + a_{2} R^{(t)} + \bm{f}(\mean{\bm{x}}^{(0)}) - f^{\star} \geq 0.
\end{equation}

We note that ${a_{1} < 0}$ when:
\begin{equation}
	\frac{1}{2} \left(1 + \frac{1}{\sqrt{N} \beta_{\max}} + L \right) +  r_{\beta} \mu_{sr} + L \mu_{xr} < 0,
\end{equation}
while ${a_{2} \geq 0}$.

With ${a_{1} < 0}$:
\begin{equation}
	-a_{1} \left(R^{(t)}\right)^{2} - a_{2} R^{(t)} - \left(\bm{f}(\mean{\bm{x}}^{(0)}) - f^{\star}\right) \leq 0,
\end{equation}
which yields:
\begin{equation}
	\label{eq:R_Sequence_lim}
	\lim_{t \rightarrow \infty} R^{(t)} \leq \frac{a_{2} + \sqrt{a_{2}^{2} - 4 a_{1} \left(\bm{f}(\mean{\bm{x}}^{(0)}) - f^{\star}\right)}}{- 2 a_{1}} = \Sigma < \infty. 
\end{equation}

Using the monotone convergence theorem in \eqref{eq:R_Sequence_lim}: ${R^{(t)} \rightarrow \Sigma}$, and:
\begin{equation}
	\lim_{k \rightarrow \infty} \left(\norm{\mean{\bm{\alpha}^{(k)} \cdot \bm{z}^{(k)}}}_{2}^{2}
	+ \norm{\mean{\bm{\beta}^{(k)} \cdot \bm{s}^{(k)}}}_{2}^{2} 
	+ \norm{\tilde{\bm{g}}^{(k+ 1)}}_{2}^{2} \right) = 0,
\end{equation}
which shows that:
\begin{equation}
	\begin{aligned}
		\lim_{k \rightarrow \infty} \norm{\mean{\bm{\alpha}^{(k)} \cdot \bm{z}^{(k)}}}_{2}^{2}
		&= \lim_{k \rightarrow \infty} \norm{\mean{\bm{\beta}^{(k)} \cdot \bm{s}^{(k)}}}_{2}^{2} \\
		&= \lim_{k \rightarrow \infty} \norm{\tilde{\bm{g}}^{(k+ 1)}}_{2}^{2} \\
		&= 0.
	\end{aligned}
\end{equation}

From \eqref{eq:S_recurrence_step}:
\begin{equation}
	\lim_{k \rightarrow \infty} S^{(k)} \leq \lim_{k \rightarrow \infty}  (\mu_{sr} R^{(k)} + \mu_{sc}) 
	\leq \mu_{sr} \Sigma + \mu_{sc} < \infty.
\end{equation}
Similarly, from the monotone convergence theorem:
\begin{equation}
	\label{eq:s_tilde_agreement}
	\lim_{k \rightarrow \infty} \norm{\tilde{\bm{s}}^{(k)}}_{2} = 0.
\end{equation}
Likewise, from \eqref{eq:X_recurrence_step}:
\begin{equation}
	\lim_{k \rightarrow \infty} X^{(k)} \leq \lim_{k \rightarrow \infty}  (\mu_{xr} R^{(k)} + \mu_{xc}) 
	\leq \mu_{xr} \Sigma + \mu_{xc} < \infty;
\end{equation}
\begin{equation}
	\label{eq:x_tilde_agreement}
	\lim_{k \rightarrow \infty} \norm{\tilde{\bm{x}}^{(k)}}_{2} = 0.
\end{equation}
Similarly, from \eqref{eq:Z_recurrence_step}:
\begin{equation}
	\lim_{k \rightarrow \infty} Z^{(k)} \leq \lim_{k \rightarrow \infty}  (\mu_{zr} R^{(k)} + \mu_{zc}) 
	\leq \mu_{zr} \Sigma + \mu_{zc} < \infty,
\end{equation}
showing that:
\begin{equation}
	\label{eq:z_tilde_agreement}
	\lim_{k \rightarrow \infty} \norm{\tilde{\bm{z}}^{(k)}}_{2} = 0.
\end{equation}

From \eqref{eq:s_tilde_agreement}, \eqref{eq:x_tilde_agreement}, and \eqref{eq:z_tilde_agreement}, we note that the agents reach \emph{agreement} or \emph{consensus}, with the local iterate of each agent converging to the mean as ${k \rightarrow \infty}$.

Moreover, from \eqref{eq:x_update_mean_z_relation}:
\begin{align}
	\norm{\mean{\bm{z}}^{(k)}}_{2} &\leq \frac{1}{\norm{\mean{\bm{\alpha}}^{(k)}}_{2}} \norm{\mean{\bm{\alpha}^{(k)} \cdot \bm{z}^{(k)}}}_{2} + \frac{\norm{\tilde{\bm{\alpha}}^{(k)}}_{2}}{\norm{\mean{\bm{\alpha}}^{(k)}}_{2}} \cdot \norm{\tilde{\bm{z}}^{(k)}}_{2};
\end{align}
hence:
\begin{equation}
	\begin{aligned}
		\lim_{k \rightarrow \infty} \norm{\mean{\bm{z}}^{(k)}}_{2} &\leq \lim_{k \rightarrow \infty} \left(\frac{1}{\norm{\mean{\bm{\alpha}}^{(k)}}_{2}} \cdot \norm{\mean{\bm{\alpha}^{(k)} \cdot \bm{z}^{(k)}}}_{2} \right. \\
		& \hspace{6em} \left. + \frac{\norm{\tilde{\bm{\alpha}}^{(k)}}_{2}}{\norm{\mean{\bm{\alpha}}^{(k)}}_{2}} \cdot \norm{\tilde{\bm{z}}^{(k)}}_{2}\right), \\
		&= 0,
	\end{aligned}
\end{equation}
yielding:
\begin{equation}
	\lim_{k \rightarrow \infty} \norm{\mean{\bm{z}}^{(k)}}_{2} = 0.
\end{equation}

Likewise, from \eqref{eq:norm_mean_s_upper_bound}:
\begin{equation}
	\begin{aligned}
		\lim_{k \rightarrow \infty} \norm{\mean{\bm{s}}^{(k)}}_{2} &\leq \lim_{k \rightarrow \infty}  \left( \frac{1}{\norm{\mean{\bm{\beta}}^{(k)}}_{2}} \cdot \norm{\mean{\bm{\beta}^{(k)} \cdot \bm{s}^{(k)}}}_{2} \right. \\
		& \hspace{6em} \left. + \frac{\norm{\tilde{\bm{\beta}}^{(k)}}_{2}}{\norm{\mean{\bm{\beta}}^{(k)}}_{2}}  \cdot \norm{\tilde{\bm{s}}^{(k)}}_{2} \right), \\
		&= 0,
	\end{aligned}
\end{equation}
giving the result:
\begin{equation}
	\lim_{k \rightarrow \infty} \norm{\mean{\bm{s}}^{(k)}}_{2} = 0.
\end{equation}

Further, from \eqref{eq:mean_s_sequence}:
\begin{equation}
	\lim_{k \rightarrow \infty} \norm{\mean{\bm{g}}^{(k)}}_{2} \leq \lim_{k \rightarrow \infty}  \left(\norm{\mean{\bm{s}}^{(k)}}_{2} + \norm{\mean{\bm{\beta}^{(k - 1)} \cdot \bm{s}^{(k - 1)}}}_{2} \right)= 0,
\end{equation}
yielding:
\begin{equation}
	\lim_{k \rightarrow \infty} \norm{\mean{\bm{g}}^{(k)}}_{2} = 0.
\end{equation}

\subsection{Proof of Theorem \ref{thm:convergence}}
\label{appdx:thm_convergence}
Since $\bm{f}$ is convex: 
\begin{equation}
	\label{eq:objective_bound_convex}
	\begin{aligned}
		\bm{f}(\mean{\bm{x}}^{(k)}) - f^{\star} &\leq \frac{1}{N} \trace{\bm{g}(\mean{\bm{x}}^{(k)}) \cdot ( \mean{\bm{x}}^{(k)} - \bm{x}^{\star})^{\T} }, \\
		&\leq \frac{1}{N}  \norm{\mean{\bm{g}}(\bm{x}^{(k)})}_{F} \norm{\mean{\bm{x}}^{(k)} - \bm{x}^{\star}}_{F} \\
		& \quad + \frac{1}{N}  \norm{\bm{g}(\mean{\bm{x}}^{(k)}) - \bm{g}({\bm{x}}^{(k)})}_{F} \norm{\mean{\bm{x}}^{(k)} - \bm{x}^{\star}}_{F}, \\
		&\leq  \norm{\mean{\bm{g}}(\bm{x}^{(k)})}_{2} \norm{\mean{\bm{x}}^{(k)} - \bm{x}^{\star}}_{2} \\
		& \quad + \frac{L}{2}\norm{\mean{\bm{x}}^{(k)} - \bm{x}^{(k)}}_{2}  \norm{\mean{\bm{x}}^{(k)} - \bm{x}^{\star}}_{2},
	\end{aligned}
\end{equation}
where ${\bm{x}^{\star} = \bm{1}_{N} \left(x^{\star}\right)^{\T}}$.

Since $\bm{f}$ is coercive by assumption and $\bm{f}(\mean{\bm{x}}^{(k)})$ is bounded from \eqref{eq:objective_bound}, $\norm{\mean{\bm{x}}^{(k)}}_{2}$ is bounded, and thus, ${\norm{\mean{\bm{x}}^{(k)} - \bm{x}^{\star}}_{2} \leq \norm{\mean{\bm{x}}^{(k)}}_{2} + \norm{\bm{x}^{\star}}_{2}}$ is bounded.
Hence:
\begin{equation}
	\lim_{k \rightarrow \infty} \left(\bm{f}(\mean{\bm{x}}^{(k)}) - f^{\star} \right) \leq 0,
\end{equation}
which indicates that:
\begin{equation}
	\label{eq:objective_mean_convergence}
	\lim_{k \rightarrow \infty} \bm{f}(\mean{\bm{x}}^{(k)}) = f^{\star}.
\end{equation}

From the mean-value theorem:
\begin{equation}
	\label{eq:f_bound_mean_value_theorem}
	\bm{f}({\bm{x}}^{(k)}) = \bm{f}(\mean{\bm{x}}^{(k)}) + \frac{1}{N} \trace{\bm{g}(\mean{\bm{x}}^{(k)} + \xi \tilde{\bm{x}}^{(k)}) \cdot \left(\tilde{\bm{x}}^{(k)}\right)^{\T} },
\end{equation}
where ${0 \leq \xi \leq 1}$.

In addition, ${\norm{\mean{\bm{x}}^{(k)} + \xi \tilde{\bm{x}}^{(k)}}_{2} \leq \norm{\mean{\bm{x}}^{(k)}}_{2} + \xi \norm{\tilde{\bm{x}}^{(k)}}_{2}}$ is bounded, as well as ${\norm{\bm{g}(\mean{\bm{x}}^{(k)} + \xi \tilde{\bm{x}}^{(k)})}_{2}}$, since $\bm{g}$ is Lipschitz-continuous. As a result, from \eqref{eq:f_bound_mean_value_theorem},
\begin{equation}
	\begin{aligned}
		&\left\lvert \bm{f}({\bm{x}}^{(k)}) - \bm{f}(\mean{\bm{x}}^{(k)}) \right\rvert \\
        & \quad = \frac{1}{N} \left\lvert \trace{\bm{g}(\mean{\bm{x}}^{(k)} + \xi \tilde{\bm{x}}^{(k)}) \cdot \left(\tilde{\bm{x}}^{(k)}\right)^{\T} } \right\rvert, \\
		& \quad \leq \norm{\bm{g}(\mean{\bm{x}}^{(k)} + \xi \tilde{\bm{x}}^{(k)})}_{2} \norm{\tilde{\bm{x}}^{(k)}}_{2}.
	\end{aligned}
\end{equation}

Hence:
\begin{equation}
	\begin{aligned}
		\lim_{k \rightarrow \infty} \left\lvert \bm{f}({\bm{x}}^{(k)}) - \bm{f}(\mean{\bm{x}}^{(k)}) \right\rvert 
  &\leq \lim_{k \rightarrow \infty} \left( \norm{\bm{g}(\mean{\bm{x}}^{(k)} + \xi \tilde{\bm{x}}^{(k)})}_{2} \right. \\
  & \hspace{4em} \left. \cdot \norm{\tilde{\bm{x}}^{(k)}}_{2} \right), \\
		&= 0,
	\end{aligned}
\end{equation}
from \eqref{eq:x_tilde_agreement}.
As a result:
\begin{equation}
	\lim_{k \rightarrow \infty} \bm{f}({\bm{x}}^{(k)}) = \lim_{k \rightarrow \infty} \bm{f}(\mean{\bm{x}}^{(k)}) = f^{\star},
\end{equation}
from \eqref{eq:objective_mean_convergence}, proving convergence to the optimal objective value.

\let\thesubsection = \oldsubsection

	\bibliographystyle{style/IEEEtran}
	\bibliography{references}

\end{document}